\newcommand\footnoteref[1]{\protected@xdef\@thefnmark{\ref{#1}}\@footnotemark}
\newtheorem{Theorem}{Theorem}
\newtheorem{Proposition}{Proposition}
\newtheorem{Lemma}{Lemma}
\newtheorem{Corollary}{Corollary}
\newtheorem*{rep@theorem}{\rep@title}
\newcommand{\newreptheorem}[2]{%
\newenvironment{rep#1}[1]{%
 \def\rep@title{#2 \ref{##1}}%
 \begin{rep@theorem}}%
 {\end{rep@theorem}}}
\theoremstyle{definition}
\newtheorem{definition}{Definition}
\newtheorem{remark}{Remark}
\newtheorem{question}{Question}
\def\ZZ{{\mathbb Z}}
\def\S{{\mathbf S}}
\begin{document}

\title{Double branched covers of tunnel number one knots}

\author{Yeonhee Jang and Luisa Paoluzzi}

\date{\today}
\maketitle

\begin{abstract}
\vskip 2mm

We provide criteria ensuring that a tunnel number one knot $K$ is not
determined by its double branched cover, in the sense that the double
branched cover is also the double branched cover of a knot $K'$ not equivalent
to $K$. 

\vskip 2mm

\noindent\emph{AMS classification: } Primary 57M25; Secondary 57M12, 57M50.

\vskip 2mm

\noindent\emph{Keywords:} tunnel number one knots, double branched covers.

\end{abstract}


\section{Introduction}
\label{s:intro}


A knot in the $3$-sphere is said to have \emph{tunnel number one} if its
exterior admits a Heegaard splitting of genus $2$. This is equivalent to say
that there is a properly embedded arc in the knot exterior whose complement
is a genus $2$ handlebody. Torus knots, $2$-bridge knots, and $(1,1)$-knots,
that is knots that can be put in $1$-bridge position with respect to a genus 
$1$ Heegaard splitting of the $3$-sphere, are all tunnel number one knots 
(see \cite{BM, BRZ, K3, MS} for instance). 
There is a vast literature studying different aspects of tunnel number one
knots and it appears that they are very special in many regards. For instance,
this class does not contain any composite knots \cite{N} or any \emph{Conway 
reducible} knots \cite{S}. Recall that a knot is \emph{Conway reducible} if it 
admits a \emph{Conway sphere}, that is an essential four-holed sphere properly 
embedded in the exterior of the knot. More generally, the exterior of a tunnel 
number one knot does not contain any planar meridinal essential surface (see, 
\cite{GR}). Having tunnel number one seems to often be a quite constraining 
condition. For instance, tunnel number one knots that are satellites were 
classified by Morimoto and Sakuma in \cite{MS}: these are a very restricted 
family of knots whose exteriors are obtained by gluing together the exteriors 
of a $2$-bridge link and of a torus knot. Similarly, alternating knots having 
tunnel number one were also classified (see \cite{L}) and, again, only very 
specific knots, namely $2$-bridge knots and certain Montesinos knots with three 
tangles, are in this class. On the other hand, there are hyperbolic tunnel 
number one knots with arbitrarily large bridge number (see \cite{J1}) and even 
genus one bridge number (see, \cite{JT,BTZ}). Note that, since this class 
contains $2$-bridge knots, it also contains hyperbolic knots with arbitrarily 
large volume.

The aim of this work is to unveil the specificity of these knots with respect to
their double branched covers. We need a definition.

\begin{definition}
We say that a knot $K$ is \emph{not determined by its double branched cover}
$M(K,2)$ if there is a knot $K'$ not equivalent to $K$ such that the manifolds 
$M(K',2)$ and $M(K,2)$ are homeomorphic. In this case we say that $K'$ is a
\emph{$2$-twin of $K$}.
\end{definition}

Throughout this paper, the {\it double branched cover of a knot} means the double cover of the $3$-sphere branched along the knot.
We have the following result.

\begin{Theorem}\label{t:main}
Let $K$ be a tunnel number one knot with bridge number $b\ge5$, then $K$
is not determined by its double branched cover. Moreover, if the double
branched cover $M=M(K,2)$ of $K$ has Heegaard genus $2$ then either $b=3$
(and so $K$ is a $(1,1)$-knot) or $K$ is not determined by $M$. In particular a 
$(1,1)$-knot of bridge index $\ge 4$ is not determined by its double branched 
cover.
\end{Theorem}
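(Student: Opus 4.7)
The plan is to exhibit on $M := M(K,2)$ an involution distinct from the covering involution $\tau_K$ whose quotient realizes $M$ as the double branched cover of a knot $K'\ne K$.

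The first step is to produce a genus-$2$ Heegaard splitting of $M$. Under the ``Heegaard genus $2$'' hypothesis this is automatic; otherwise, since $K$ has tunnel number one, a standard construction (lifting and destabilizing the tunnel-induced genus-$2$ Heegaard splitting of $S^3$) shows that $M$ has Heegaard genus at most $2$, and the assumption $b \ge 3$ prevents $M$ from being a lens space, so the Heegaard genus is exactly $2$. The $(1,1)$-knot corollary is handled in the same way, since a $(1,1)$-decomposition of $K$ directly furnishes a genus-$2$ splitting of $M$.

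Fix such a splitting $M = V_1 \cup_\Sigma V_2$. Each handlebody $V_i$ admits a hyperelliptic involution $\sigma_i$ extending the unique (up to isotopy) hyperelliptic involution of the closed genus-$2$ surface $\Sigma$; since the restrictions agree, the $\sigma_i$'s glue to an involution $\sigma\colon M\to M$. Each quotient $V_i/\sigma_i$ is a $3$-ball containing a trivial $3$-string tangle, so $M/\sigma \cong S^3$ with branch set a link $L$ in $3$-bridge position. If $L$ is a knot $K'$, then $b(K') \le 3 < b(K)$ forces $K' \ne K$, yielding the desired $2$-twin.

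The main obstacle is to verify that $L$ is \emph{connected}, i.e.\ a knot rather than a link of $2$ or $3$ components. I would address this by analyzing the interaction between $\sigma$ and the covering involution $\tau_K$, whose fixed set $\tilde K \subset M$ is a single circle. After arranging (via a careful choice of Heegaard splitting compatible with $\tau_K$) that $\sigma$ and $\tau_K$ commute, the group $\langle \sigma, \tau_K\rangle \cong (\ZZ/2)^2$ produces three non-trivial involutions on $M$, and a component-counting argument using the connectedness of $\tilde K$ should force at least one of $\sigma$ or $\tau_K\sigma$ to have connected fixed set, so that the corresponding branch locus in the quotient is a knot. The exceptional case $b = 3$ in the ``Moreover'' part arises because the hyperelliptic quotient $L$ may coincide with $K$ itself when $K$ is already $3$-bridge; by classical classification results such tunnel-number-one knots are exactly the $(1,1)$-knots of bridge index $3$, and this case resists the construction above and is therefore left aside.
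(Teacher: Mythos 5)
Your argument breaks down at the very first step, and the failure is not repairable within your framework. You assert that, since $K$ has tunnel number one, ``lifting and destabilizing the tunnel-induced genus-$2$ Heegaard splitting of $S^3$'' yields a genus-$2$ splitting of $M$. This is false. If $V$ is a neighbourhood of $K$ together with an unknotting tunnel and $W$ is the complementary genus-$2$ handlebody, then the branched double cover of $V$ is a solid torus (the lift of $N(K)$) with \emph{two} $1$-handles attached (the two lifts of the tunnel), i.e.\ a genus-$3$ handlebody, and the unbranched double cover of $W$ is again a genus-$3$ handlebody. So the induced splitting of $M$ has genus $3$, and there is no general destabilization: the paper explicitly records only that the Heegaard genus of $M$ is \emph{at most $3$}, and even poses as an open question whether a non-$(1,1)$ tunnel number one knot can have a genus-$2$ double branched cover. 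This is precisely why the theorem's unconditional threshold is $b\ge 5$ rather than $b\ge 4$: the paper's argument produces a hyperelliptic involution of the genus-$3$ splitting, whose quotient is a knot in $4$-bridge position, so one only gets a $2$-twin when $b(K)\ge 5$. Moreover, obtaining that hyperelliptic involution is itself nontrivial for genus $3$ (unlike genus $2$, where every splitting carries one): the paper takes a strong inversion of $K$, looks at its two lifts $u$ and $su$ to $M$, and uses the connectedness of their fixed-point sets together with the action of $s$ on the two lifts of the tunnel to show that one lift, $u$, is hyperelliptic for the genus-$3$ splitting. Your proposal contains no substitute for this step.

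The parts of your proposal concerning the genus-$2$ case (the ``Moreover'' clause and the $(1,1)$ statement) are essentially the paper's argument and are fine in outline, but your treatment of the connectedness of the branch locus is overcomplicated and shaky as stated: there is no need to arrange commuting involutions and count components. Since $M$ is the double branched cover of a knot, it is a $\ZZ/2\ZZ$-homology sphere, and the double cover of $S^3$ branched over an $n$-component link has $H_1(\,\cdot\,;\ZZ/2\ZZ)\cong(\ZZ/2\ZZ)^{n-1}$; hence any presentation of $M$ as a double branched cover has connected branch set. This is what the paper means by ``necessarily'' a $3$-bridge \emph{knot}. To fix your proof you would need to abandon the claimed reduction to genus $2$ in the $b\ge5$ case and instead carry out the genus-$3$ analysis above.
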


An immediate consequence is that a tunnel number one knot is not determined by
its double branched cover if the Heegaard genus of the cover is two and the
knot is not $(1,1)$. Surprisingly enough we know no examples of knots with this
property, so it is natural to ask the following:

\begin{question}
Is there a tunnel number one knot that is not $(1,1)$ but whose double branched 
cover has Heegaard genus 2?
\end{question}

The peculiarity of tunnel number one knots expressed in Theorem \ref{t:main} is 
related to another special feature of these knots, that is the fact that they 
are all \emph{strongly invertible}. Recall that a knot $K$ is 
\emph{strongly invertible} if there is an orientation-preserving involution of 
the $3$-sphere with non-empty fixed-point set which leaves $K$ invariant and 
reverses its orientation, in particular the fixed-point set of the involution 
meets the knot in precisely two points. Such an involution is called a 
\emph{strong inversion}. Observe that the existence of a strong inversion for a 
tunnel number one knot is a straightforward consequence of the fact that there 
is a hyperelliptic involution of a genus-two Heegaard surface which extends to 
a global involution of the genus two splitting (\cite{BH}). 

It is well-known that the presence of symmetries may sometimes reflect the fact
that the knot is not determined by its cyclic-branched covers (see, for
instance \cite{Na,S1}). For a $\pi$-hyperbolic knot $K$, it was initially 
observed by Boileau and Flapan in \cite{BF} that if $K$ is not determined by 
its double branched cover then it is either strongly invertible or admits a 
\emph{$\pi$-rotation}, that is an orientation-preserving involution of the 
$3$-sphere with non-empty fixed-point set which leaves $K$ invariant and 
preserves its orientation. There are, however, knots that are not determined by 
their double covers and admit no symmetry: this is the case for instance of 
certain Conway reducible knots which admit $2$-twins obtained by Conway 
mutation; note that for some Conway reducible knots, like Montesinos knots 
with at least four tangles, the presence of symmetries is not related to the
existence of $2$-twins (see \cite{V,M,C,KT} and \cite{P,MW} for other types of 
examples).

Observe that, in the case of tunnel number one knots, the trivial knot is
determined by its double branched cover by a result of Waldhausen \cite{W} (and 
more generally by the positive solution to the Smith conjecture \cite{MB}) as 
are $2$-bridge knots, by a work of Hodgson and Rubinstein \cite{HR}.

The only cases that remain to be considered are those of tunnel number one
knots of bridge number $3$ and $4$. Here we will consider several classes of
tunnel number one knots and show that, even if the bridge index is $3$ or $4$, 
these knots are often not determined by their double branched covers. The 
following is mostly well-known and covers all non $\pi$-hyperbolic tunnel 
number one knots.

\begin{Proposition}\label{p:cases}
\begin{itemize}
\item A torus knot $T(p,q)$, $p<q$, is determined by its double branched cover 
if and only if either $p=2$, that is it is a $2$-bridge knot, or 
$(p,q)\in\{(3,4),(3,5)\}$.
\item A tunnel number one Montesinos knot is not determined by its double
branched cover if and only if it is not a $2$-bridge knot or a torus knot, and
admits a torus knot as a $2$-twin.
\item A satellite tunnel number one knot $K$ is never determined by its double
branched cover. $K$ admits a $2$-twin $K'$ which is hyperbolic and Conway 
reducible, in particular $K'$ is not a tunnel number one knot. $K$ does not admit any
tunnel number one $2$-twins.
\end{itemize}
\end{Proposition}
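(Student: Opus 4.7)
The proof splits along the three bullet points, with the satellite case carrying the main technical content.

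For the torus knot case, the double branched cover of $T(p,q)$ (with $p<q$ coprime) is the Brieskorn sphere $\Sigma(2,p,q)$. If $p=2$ the knot is $2$-bridge and determined by Hodgson--Rubinstein \cite{HR}. If $(p,q) \in \{(3,4),(3,5)\}$, the cover is a spherical space form; I would classify its orientation-preserving involutions by lifting to the universal cover $S^3$ and using the binary polyhedral group action, then verify that the only involution with $S^3$ quotient branched over a knot returns $T(p,q)$ itself. For the remaining $p \ge 3$, I would exhibit an explicit $2$-twin: the Montesinos knot $M(1/2, 1/p, 1/q)$ has double branched cover $\Sigma(2,p,q)$ by construction and is not equivalent to $T(p,q)$, since the only torus knots expressible as Montesinos knots of this type are precisely $T(3,4)$ and $T(3,5)$ (this can be checked by bridge number or by direct comparison).

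For the Montesinos case, I would invoke the classification of tunnel number one Montesinos knots (Lustig--Moriah, see \cite{L}): such a knot is either $2$-bridge or a Montesinos knot with three rational tangles, one of which is of type $\pm 1/2$. The $2$-bridge case is determined by Hodgson--Rubinstein, while the torus knot cases (necessarily $T(3,4)$ or $T(3,5)$) are determined by the first bullet. In the remaining case, $M(K,2) \cong \Sigma(2,p,q)$ for some $p,q \ge 3$, so the torus knot $T(p,q)$ is a $2$-twin of $K$ distinct from $K$, and $K$ is not determined. This yields both directions of the biconditional.

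For the satellite case, by Morimoto--Sakuma \cite{MS}, $E(K)$ decomposes along an essential torus $T$ (the companion torus) into a torus knot exterior and the exterior of a $2$-bridge link. The torus $T$ lifts to an essential torus $\tilde T$ in $M := M(K,2)$ on which the covering involution $\iota$ acts freely (so that $\tilde T / \iota = T$). To produce a $2$-twin $K'$, I would construct an alternative involution $\iota'$ of $M$ whose restriction to $\tilde T$ is the hyperelliptic involution with four fixed points; such an $\iota'$ exists because the double cover of the $2$-bridge link exterior admits an involution with this property on its torus boundary. In the quotient $M/\iota'$, the torus $\tilde T$ descends to a Conway sphere for $K'$, so $K'$ is Conway reducible and in particular not of tunnel number one by Scharlemann \cite{S}. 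Hyperbolicity of $K'$ would follow from examining the JSJ decomposition of $M$ and showing that no essential torus survives in $E(K')$ and that $E(K')$ admits no Seifert fibration. Finally, any tunnel number one $2$-twin of $K$ would, by the equivariant torus theorem, preserve $\tilde T$ up to isotopy, and the resulting quotient knot must inherit either an essential Conway sphere or an essential planar meridianal surface, contradicting \cite{S} or \cite{GR}.

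The principal obstacles lie in the third bullet: the construction of the alternative involution $\iota'$, the verification that $K'$ is hyperbolic, and the exclusion of tunnel number one $2$-twins all require detailed analysis of the JSJ decomposition of $M(K,2)$ together with the classification of orientation-preserving involutions on its Seifert fibered pieces.
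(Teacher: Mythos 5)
Your overall architecture (three bullets, with the satellite case carrying the weight) matches the paper's, but two of the three arguments contain genuine errors. The most serious is the Montesinos bullet: you assert that a tunnel number one Montesinos knot $K$ which is neither $2$-bridge nor a torus knot has $M(K,2)\cong\Sigma(2,p,q)$, so that a torus knot is automatically a $2$-twin. This is false. The double branched cover of $K$ is the Seifert fibred space with exactly the invariants $(b;(\alpha_1,\beta_1),(\alpha_2,\beta_2),(\alpha_3,\beta_3))$ of $K$, and it coincides with the double cover of a torus knot only when the $\beta_i$ and the Euler number match the very rigid values recorded in Proposition~\ref{p:torusKcovers} (e.g.\ $e=\pm1/(2pq)$ in the odd case). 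For instance the Montesinos knot $(0;(2,1),(3,1),(5,1))$ is tunnel number one by the Klimenko--Sakuma criterion (\cite{KS}; note the classification is theirs, not Lustig--Moriah's, and it contains a second family with $\alpha_1=\alpha_2=3$ that your description omits), yet its Euler number is $-31/30\neq\pm1/30$, so it admits no torus knot as a $2$-twin and is determined by its cover. This is precisely why the second bullet is a conditional statement and why the paper proves it by matching Seifert invariants against \cite{NR}; you also still owe the ``only if'' half the fact that torus knots are the \emph{only} possible $2$-twins, which requires the orbifold theorem plus the uniqueness of the Montesinos link with a given Seifert fibred double cover. A cousin of the same slip appears in the torus-knot bullet: when $p$ or $q$ is even the cover of $T(p,q)$ has exceptional fibres of orders $(p/2,q,q)$ or $(p,p,q/2)$, not $(2,p,q)$, so $M(1/2,1/p,1/q)$ is not a $2$-twin there; and for $p=3$ the Montesinos candidate also has bridge number $3$, so inequivalence cannot be ``checked by bridge number'' --- one needs hyperbolicity of the Montesinos knot for $q>5$, which is how the paper argues.

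In the satellite bullet your construction of the Conway reducible twin is essentially the paper's (two caveats: the preimage of the companion torus is a single torus only when the linking number of the two components of the $2$-bridge link is odd, cf.\ Lemma~\ref{l:2bridge}, otherwise it is two tori exchanged by the covering involution; and $\iota'$ must also be prescribed on the Seifert pieces --- the paper uses a Montesinos involution there and one must still check the quotient is $(\S^3,\mathrm{knot})$). The genuine gap is in the last assertion. A covering involution $g$ preserving the (equivariant) JSJ tori of $M$ either meets them, in which case the quotient knot is Conway reducible and \cite{S} applies as you say, or is disjoint from them, in which case the quotient knot is a \emph{satellite} knot. Such a knot carries a closed essential torus, which is neither planar nor meridional, so \cite{GR} gives no contradiction --- indeed the Morimoto--Sakuma satellites are themselves toroidal tunnel number one knots, so this case cannot be excluded on general grounds. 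What remains to be shown is that any tunnel number one satellite knot with the same double branched cover is equivalent to $K$; the paper does this by reconstructing the $2$-bridge link and the torus knot of the Morimoto--Sakuma description from the JSJ pieces of $M$, with a separate argument for the degenerate case where the lifted link $L'$ is the Hopf link. Without that step the claim that $K$ has no tunnel number one $2$-twin is unproven. (Note also that the first sentence of the bullet follows immediately from Theorem~\ref{t:main} once one knows these satellites are $(1,1)$ with bridge number at least $4$, which is the paper's first, shorter proof.)
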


Taking into account the classification provided by Klimenko and Sakuma
\cite{KS}, the second part of the result above can be made more precise. The 
statement is however slightly involved and will be given in 
Section~\ref{s:easycases}.

Using Lackenby's classification of alternating tunnel number one knots \cite{L}
we get the following.

\begin{Corollary}\label{c:alternating}
Tunnel number one alternating knots are determined by their double branched
covers.
\end{Corollary}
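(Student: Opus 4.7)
The plan is to combine Lackenby's classification \cite{L} of alternating tunnel number one knots with Proposition \ref{p:cases}. By \cite{L}, every alternating tunnel number one knot $K$ is either a $2$-bridge knot or a Montesinos knot with three rational tangles of a restricted form. If $K$ is a $2$-bridge knot, then it is determined by its double branched cover by the theorem of Hodgson and Rubinstein \cite{HR}, and if $K$ is a torus knot then alternation forces $K=T(2,q)$, which is again a $2$-bridge knot. Both subcases are thus immediate.

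It remains to treat the case where $K$ is an alternating tunnel number one Montesinos knot with three tangles that is neither $2$-bridge nor a torus knot. The second item of Proposition \ref{p:cases} then reduces the problem to showing that such a $K$ does not admit any torus knot $T(p,q)$ (necessarily with $p \geq 3$) as a $2$-twin. Suppose for contradiction that it does. Then the common double branched cover is the Brieskorn homology sphere $\Sigma(2,p,q)$, which is Seifert fibered over $S^2$ with exceptional fibres of multiplicities $2,p,q$ and a specific small Euler number. On the other hand, the double branched cover of $K = M(\beta_1/\alpha_1, \beta_2/\alpha_2, \beta_3/\alpha_3)$ is Seifert fibered with exceptional multiplicities $\alpha_1, \alpha_2, \alpha_3$, and the alternating hypothesis forces the tangle fractions $\beta_i/\alpha_i$ to share a common sign, which constrains the Euler number to a range incompatible with that of $\Sigma(2,p,q)$.

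The main obstacle is to turn this heuristic into a uniform check. I would pair the Seifert-invariant computation just described with the explicit list of tunnel number one Montesinos knots due to Klimenko and Sakuma \cite{KS}, the same list that underlies the refined version of Proposition \ref{p:cases} promised in Section \ref{s:easycases}, and verify family by family that no alternating representative can share its Seifert invariants with a Brieskorn sphere of matching multiplicities. The combinatorics of the alternating condition and the very rigid form of the Seifert data of a Brieskorn integer homology sphere should make this a short case analysis rather than a substantial argument, but it is the step that genuinely needs to be carried out. Once the numerical incompatibility is established, Proposition \ref{p:cases} yields that $K$ is determined by its double branched cover, completing the proof.
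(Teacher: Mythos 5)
Your proposal is correct and follows essentially the same route as the paper: Lackenby's classification reduces everything to the three-tangle Montesinos case, and the paper's one-line proof is precisely your Euler-number comparison, namely that the Euler numbers $1/(2pq)$, $1/(kq)$, $1/(kp)$ of double branched covers of torus knots from Proposition~\ref{p:torusKcovers} are too small in absolute value to equal $e=b-\sum_{i}\beta_i/\alpha_i$ when the alternating condition forces all the $\beta_i/\alpha_i$ to have the same sign. The only imprecision is your identification of the common cover with the Brieskorn sphere of multiplicities $(2,p,q)$, which holds only when $pq$ is odd (otherwise the exceptional fibres have orders $(p/2,q,q)$ or $(p,p,q/2)$), but the Euler-number bound applies uniformly in all three cases, so no case of your analysis is actually lost.
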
 

It follows that alternating tunnel number one knots trivially fulfill Greene's
conjecture that the $2$-twin of an alternating prime knot is alternating
\cite{G}.

Unfortunately, for $\pi$-hyperbolic tunnel number one knots providing a 
complete answer seems hard. Nonetheless, we will see in Section~\ref{s:two-pi} 
that the knots belonging to two families, namely twisted torus knots and knots 
obtained by Dehn surgery on a component of a hyperbolic $2$-bridge link, are 
``generically" not determined by their double branched covers. Note that the 
arguments follow the same lines as those used by Reni and Zimmermann in 
\cite{RZ} to provide examples of knots not determined by their double branched covers.  

More interestingly, we show that a $\pi$-hyperbolic tunnel number one knot of
bridge index three or four can only be determined by its double branched cover 
if its minimal genus Heegaard splittings have small Hempel distance.  

\begin{Theorem}\label{t:hempel}
Let $K$ be a $\pi$-hyperbolic tunnel number one knot with bridge number 
$b\in\{3,4\}$ and let $M$ be its double branched cover.
Let $g\in\{2,3\}$ be the Heegaard genus of $M$ and assume that any minimal 
genus Heegaard splitting for $M$ has Hempel distance at least $2g+1$. Then $K$ 
cannot be determined by $M$.
\end{Theorem}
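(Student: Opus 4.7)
The plan is to produce a 2-twin of $K$ by combining the strong inversion guaranteed by tunnel number one with the rigidity of the minimal-genus Heegaard splitting of $M$ enforced by the Hempel distance hypothesis. First, fix a strong inversion $\iota$ of $(S^3,K)$, with axis $A$ meeting $K$ in two points, and write $\tau$ for the deck involution of $M\to S^3$. The standard lifting argument produces an involution $\tilde\iota$ of $M$ commuting with $\tau$, so that $G=\langle\tau,\tilde\iota\rangle\cong(\mathbb{Z}/2)^{2}$ acts on $M$; since $K$ is $\pi$-hyperbolic, $M$ is hyperbolic and Mostow rigidity makes this action isometric. The three non-trivial elements $\tau$, $\tilde\iota$, $\sigma:=\tilde\iota\tau$ are then orientation-preserving involutions of $M$ with one-dimensional fixed-point set, and the 2-twin candidate $K'$ will be extracted from $\sigma$.

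Next I bring in the Heegaard structure. Putting $K$ in $\iota$-equivariant bridge position, the bridge sphere lifts to a $G$-invariant Heegaard surface $\tilde S\subset M$ of genus $b-1\in\{2,3\}$. When $g=b-1$, this $\tilde S$ realises the minimal Heegaard genus of $M$, and the Scharlemann--Tomova/Johnson uniqueness theorem for Heegaard splittings of Hempel distance at least $2g+1$ forces $\tilde S$ to be the unique minimal-genus Heegaard surface of $M$ up to isotopy; every self-isometry of $M$, in particular every element of $G$, then preserves $\tilde S$ up to isotopy. The residual subcase $g<b-1$ is already covered by Theorem~\ref{t:main}.

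I then identify $M/\sigma$. The quotient orbifold $M/G$ has underlying space $S^3$ with branch locus $A\cup\alpha$, where $\alpha=K/\iota$ is an arc with endpoints on $A$ and $A$ is unknotted in $S^3/\iota$. A stabiliser computation along the orbifold cover $M\to M/G$ identifies the image in $M/G$ of the $\sigma$-fixed set with the closure of $\alpha$ by one of the two complementary sub-arcs of $A$; this closure is a knot $K'\subset S^3$, and $M/\sigma\to S^3$ is the double branched cover along $K'$. Hence $M$ is the double branched cover of both $(S^3,K)$ and $(S^3,K')$, so $K'$ is a 2-twin of $K$ provided $K'\not\sim K$. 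For the latter, any equivalence $(S^3,K)\sim(S^3,K')$ would lift to an isometry $\phi$ of $M$ conjugating $\tau$ to $\sigma$; by the uniqueness of $\tilde S$, $\phi$ would preserve $\tilde S$ and induce on it a symmetry intertwining the restrictions of $\tau$ and $\sigma$. The Hempel distance lower bound $2g+1$ rules out such a symmetry by restricting the mapping class group of the splitting sufficiently, yielding the desired contradiction.

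The main obstacle will be the identification step: verifying that the image of the $\sigma$-fixed set in $S^3/\iota$ closes up to a single knot (rather than a two-component link), and that $M/\sigma$ really is $S^3$, reduces to an explicit orbifold-cover analysis of $(S^3,A\cup\alpha)$ with $A$ unknotted. The outcome of this analysis depends on the fine combinatorial structure of $\alpha$ relative to $A$ coming from the strongly invertible tunnel-one structure of $K$, and is likely to require a case analysis paralleling the Klimenko--Sakuma classification mentioned earlier in the paper.
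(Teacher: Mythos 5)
Your overall strategy --- lift the strong inversion to $M$, extract a second involution whose quotient is $(\S^3,K')$, and use the Hempel distance hypothesis to show the two covering involutions cannot be conjugate --- is the same as the paper's, but the proposal leaves placeholders exactly where the paper does the real work, and one setup claim is unjustified. On the setup: you put $K$ in ``$\iota$-equivariant bridge position'' and lift the bridge sphere, but it is not known that a minimal bridge sphere can be chosen invariant under a strong inversion. The paper avoids this entirely: for $b=3$ it lifts the genus-one Heegaard torus of a $(1,1)$-presentation (which exists by Kim's theorem), and for $b=4$ it lifts the genus-two splitting of the exterior coming from the unknotting tunnel, on which the strong inversion acts as the Birman--Hilden hyperelliptic involution by construction.

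The two substantive gaps are these. First, the identification of the quotient with $(\S^3,K')$, which you flag as ``the main obstacle'' and propose to settle by an orbifold case analysis ``paralleling Klimenko--Sakuma'', is handled in Section~\ref{s:prfthm} with no case analysis: the correctly chosen lift $u$ of the strong inversion acts as a hyperelliptic involution of the lifted splitting, so each handlebody quotients to a ball, $M/\langle u\rangle=\S^3$, and the branch set is in $(g+1)$-bridge position; it is a knot rather than a link because $Fix(u)$ is connected, $M$ being a $\ZZ/2\ZZ$-homology sphere. (The Klimenko--Sakuma classification concerns Montesinos knots and plays no role here; note also that only one of the two lifts of the strong inversion is hyperelliptic for the splitting, so your choice of $\sigma=\tilde\iota\tau$ versus $\tilde\iota$ is not innocuous.) Second, and most importantly, your endgame ``the Hempel distance lower bound rules out such a symmetry by restricting the mapping class group of the splitting sufficiently'' is not an argument. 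The paper's contradiction runs: Scharlemann--Tomova gives an ambient isotopy $f$ carrying $h(\Sigma)$ back to $\Sigma$, so that $fsf^{-1}$ and $u$ are both hyperelliptic involutions of the \emph{same} splitting; for $g=2$ the hyperelliptic involution of $\Sigma$ is unique up to isotopy, while for $g=3$ one needs Johnson's finiteness of $Mod(M,\Sigma)$ (valid since the distance is at least $4$), Nielsen realization to make this finite group act conformally, and the fact that a finite group of automorphisms of a Riemann surface contains at most one hyperelliptic element. Either way $s$ and $u$ are forced to be isotopic in $M$, which is absurd since they are distinct isometries of a hyperbolic manifold. None of these ingredients appears in your sketch, and without them the contradiction does not materialize.
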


To be more precise, we only require that a specific minimal genus Heegaard
splitting induced either by a $(1,1)$-presentation for the knot or a genus-two
splitting of its exterior satisfies the given condition on the Hempel distance.

We remark that Theorem~\ref{t:hempel} does not mean that a ($\pi$-hyperbolic tunnel number one) knot is determined by its double branched cover $M$ if $M$ admits a minimal genus Heegaard splitting with small Hempel distance.
See Remark~\ref{r:smalldistance} for examples of knots which admit 2-twins and whose double branched covers admit genus-$2$ Heegaard splittings with Hempel distance at most $2$.

\bigskip

The paper is organised as follows: In Section~\ref{s:prfthm} we prove
Theorem~\ref{t:main}. In Section~\ref{s:easycases} we provide a 
characterisation of the non $\pi$-hyperbolic tunnel number one knots that are 
not determined by their double branched covers. In the last two sections we 
discuss the case of $\pi$-hyperbolic tunnel number one knots: in
Section~\ref{s:two-pi} we show that knots of this type belonging to two 
specific classes are generically not determined by their double branched covers 
and in Section~\ref{s:hempel} we prove Theorem~\ref{t:hempel}.


\section{Proof of Theorem~\ref{t:main}}
\label{s:prfthm}


Let $K$ be a tunnel number one knot and $M=M(K,2)$ its double branched cover.
Let $V$ be a tubular neighborhood of the union of $K$ and an unknotting tunnel 
$\tau$, and $W$ be the closure of $\S^3\setminus V$ (see 
Figure~\ref{fig_strong_involution}).
%
\begin{figure}[btp]
\begin{center}
\includegraphics*[width=10cm]{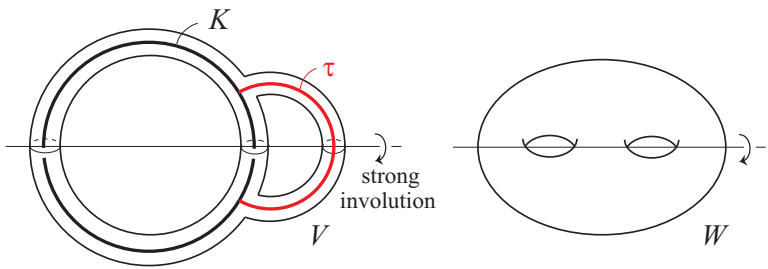}
\end{center}
\caption{$V$, $W$ and a strong involution.}
\label{fig_strong_involution}
\end{figure}
%
Then ${\rm Cl}(V\setminus N(K))\cup W$ is a Heegaard splitting of genus $2$ of 
the exterior of $K$. Consider the lift to $M$ of the Heegaard splitting 
${\rm Cl}(V\setminus N(K))\cup W$: it is a Heegaard splitting of genus $3$ of 
$M$. This fact can be seen as follows. The $2$-fold cover of $V$ branched along 
$K$ is the union of a solid torus which is the neighborhood of the lift of $K$ 
and two $1$-handles corresponding to the lifts of $\tau$. Remark that the cover 
is unbranched on the complement. It follows that the Heegaard genus of $M$ is 
at most $3$. Denote by $s$ the involution of $M$ that generates the group of 
deck transformations of the cover, so that we have 
$(M,Fix(s))/\langle s \rangle=(\S^3,K)$.

Consider now the strong inversion of $K$ that acts as a hyperelliptic involution
of each handlebody of the genus-two splitting and whose fixed-point set meets 
the chosen tunnel for $K$ in one point. Let $u$ and $su$ be the two lifts of 
the strong inversion to $M$. We claim that one of these two lifts, say $u$, 
acts as a hyperelliptic involution of the genus-three Heegaard splitting of 
$M$. First of all note that both $Fix(u)$ and $Fix(su)$ are non empty and, 
since $M$ is a $\ZZ/2\ZZ$-homology sphere, connected. Since $s$ commutes with 
both $u$ and $su$ it must leave both their fixed-point sets invariant. Now the 
union of $Fix(u)$ and $Fix(su)$ must meet each of the lifts of the tunnel. 
Since $s$ exchanges these two lifts, both handles must meet the fixed-point set 
of the same element $u$. It now follows that $u$ acts as a hyperelliptic 
involution (see Figure \ref{fig_strong_involution_lift}).
%
\begin{figure}[btp]
\begin{center}
\includegraphics*[width=10cm]{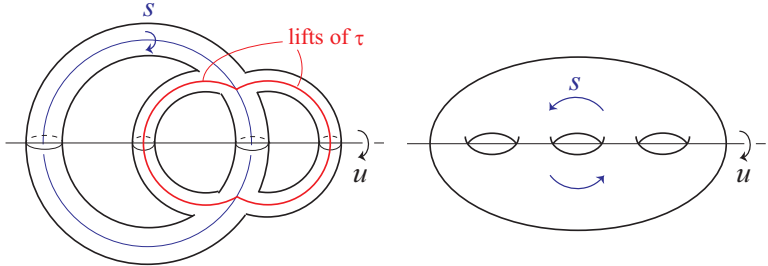}
\end{center}
\caption{$u$ acts as a hyperelliptic involution.}
\label{fig_strong_involution_lift}
\end{figure}
%

Since $u$ acts as a hyperelliptic involution, it follows that
$(M,Fix(u))/\langle u \rangle=(\S^3, K_u)$, where $K_u$ is a knot admitting a
$4$-bridge presentation. It follows that the bridge index of $K_u$ is at most
$4$. If the bridge index of $K$ is at least $5$, then $K_u$ is a $2$-twin of 
$K$, and $K$ is not determined by its double branched cover.

Assume now that the Heegaard genus of $M$ is two. Any minimal genus Heegaard
splitting of $M$ admits a hyperelliptic involution $t$. We can now repeat the
same argument as before. The involution $t$ is a deck transformation of a
double branched cover of a (necessarily) $3$-bridge knot
$K_t$. It follows that either the bridge index of $K$ is $3$ or $K$ is not
determined by its double branched cover. Note that by \cite{K} a tunnel number 
one knot of bridge index $3$ is a $(1,1)$-knot.

Assume now that $K$ is a $(1,1)$-knot. The genus-one Heegaard splitting of
$\S^3$ with respect to which $K$ is in $1$-bridge position lifts to a genus-two
Heegaard splitting for $M$, which has thus Heegaard genus at most $2$. It
follows from the discussion above that a $(1,1)$-knot of bridge index $\ge 4$
is not determined by its double branched cover.
\qed


\section{Non $\pi$-hyperbolic tunnel number one knots}
\label{s:easycases}


In this section we shall characterise the non $\pi$-hyperbolic tunnel number
one knots that are determined by their double branched covers. Recall that
tunnel number one knots are necessarily prime. A prime knot is either simple or
toroidal. Toroidal, i.e. satellite, tunnel number one knots were classified
independently by Morimoto and Sakuma \cite{MS}, and Eudave-Mu\~noz \cite{E}. 
Simple knots are either torus knots, which are all tunnel number one, or 
hyperbolic. Since tunnel number one knots are Conway irreducible, the double 
cover of a hyperbolic tunnel number one knot is atoroidal and admits a 
geometric structure. Such structure can be spherical if the knot is a 
$2$-bridge knot, Seifert fibred if the knot is a Montesinos knot or a torus
knot, or hyperbolic if the knot is $\pi$-hyperbolic. Knots of bridge index at 
most $2$ (including the trivial knot) have all tunnel number one and it is 
well-known that they are determined by their double branched covers. Tunnel 
number one Montesinos knots were classified by Klimenko and Sakuma \cite{KS}.

\subsection{Torus knots}

Let $K=T(p,q)$ be the torus knot of type $(p,q)$, with $2\le p<q$, and $p$ and 
$q$ coprime. Recall that the bridge index of $K$ is $p$, in particular if $p=2$, 
$K$ is a $2$-bridge knot and it is determined by its double branched cover. We 
can thus assume that $p\ge3$. The double branched cover $M$ of $K$ is a Seifert 
fibred manifold with base the $2$-sphere and three exceptional fibres of orders 
$(2,p,q)$ if $pq$ is odd, $(p/2,q,q)$ if $p$ is even, or $(p,p,q/2)$ if $q$ is 
even. Note that, since $p>2$ the fibration of $M$ is unique. Now $M$ admits an
involution of Montesinos type, so that $M$ is the double branched cover of a
Montesinos knot $K'$ with three rational tangles which may or may not be 
equivalent to $K$. In fact, $K'$ is hyperbolic most of the time and so, in this
case it is a $2$-twin of $K$. Remark that since the bridge index of $K$ is 
equal to $p$, we can conclude that $K$ is not determined by its double branched
cover whenever $p>3$. We only need to consider the case where $p=3$. In
this case it is known that $K'$ is hyperbolic for all $q>5$, and hence $K$ is 
not determined by its double branched cover. If $q\in\{4,5\}$ the knot is 
determined by its double branched cover. Indeed, in these two cases
$K'$ must be a simple, non hyperbolic knot. It follows that $K'$ is a torus
knot. Now since fibrations of double branched covers of torus knots are unique
we conclude that in this case $K'$ is equivalent to $K$. To conclude we only
need to observe that, by the orbifold theorem, any involution of $M$ that is
the deck transformation of a double branched cover of a knot must
preserve the Seifert fibration. If the involution induces an orientation 
preserving map of the base of the fibration, it must act as the covering 
involution for the torus knot. If the involution reverses the orientation of 
the base and fixes each exceptional fiber, it must act as a Montesinos 
involution. For the case where $(p,q)=(3,4)$, there is also a possibility that 
the involution reverses the orientation of the base, fixes the fiber of order 
$2$ and exchanges the other two of order $3$. In this case, it can be seen that 
the quotient of $M$ by the involution is a lens space of type $(3,1)$ and not 
$\S^3$ by using an argument similar to that in \cite[Proof of Lemma 3.3]{BZ} 
for instance.

\subsection{Montesinos knots}


We will exploit the classification of these knots obtained by Klimenko and
Sakuma.

\begin{Theorem}[Klimenko-Sakuma~\cite{KS}]
Let $K$ be a Montesinos knot with invariants of the form 
$$(b;(\alpha_1,\beta_1),(\alpha_2,\beta_2),\dots (\alpha_r,\beta_r))$$
where, for each $i$, we can assume that $\alpha_i$ and $\beta_i$ are coprime
and $0<\beta_i<\alpha_i$. Then $K$ is a tunnel number one knot if and only if one of 
the following condition is satisfied:
\begin{enumerate}
\item $K$ is a $2$-bridge knot (that is $r\le2$);
\item $r=3$, $\alpha_1\le\alpha_2\le\alpha_3$ and either 
\begin{enumerate}
\item $\alpha_1=2$ and $\alpha_2\alpha_3$ is odd; or
\item $\alpha_1=\alpha_2=3$, $\beta_1=\beta_2$, $\alpha_3\ge4$ is not
divisible by $3$, and the Euler number is of the form $e=\pm 1/(3\alpha_3)$. 
\end{enumerate} 
\end{enumerate}
\end{Theorem}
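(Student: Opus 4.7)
The plan is to split the classification into the two implications and to tackle each using the double branched cover and its Seifert structure. For \textbf{sufficiency}, I would exhibit explicit unknotting tunnels in each listed family. Two-bridge knots ($r\le 2$) admit the standard bridge arc as a tunnel. For case (2)(a), the hypothesis $\alpha_1=2$ makes the first tangle integral, and an arc joining its two strands through (part of) the tangle structure can be shown to be an unknotting tunnel by a Morimoto--Sakuma-style construction; the parity condition $\alpha_2\alpha_3$ odd is used at the verification that the complement of $N(K\cup\tau)$ is a genus-two handlebody (the even case would leave a Seifert piece behind). For case (2)(b), one exploits the involution $\iota$ exchanging the two identical $(3,\beta)$-tangles: the fixed arc of $\iota$ meets $K$ transversally in two points, and the arc between those points serves as a candidate unknotting tunnel, the verification that the complement is a handlebody being where the Euler number hypothesis $e=\pm 1/(3\alpha_3)$ enters.

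For \textbf{necessity}, the strategy is to pass to $M=M(K,2)$, which for a Montesinos knot with $r\ge 3$ tangles is a small Seifert fibred space over the orbifold $\S^2(\alpha_1,\dots,\alpha_r)$. The tunnel number one hypothesis implies, as in the proof of Theorem~\ref{t:main}, that $M$ has Heegaard genus at most $3$ and moreover carries a Heegaard splitting invariant under the covering involution $s$. I would invoke the classification of Heegaard splittings of small Seifert manifolds (Boileau--Otal, Moriah--Schultens) together with Boileau--Zieschang-type genus bounds to conclude $r\le 4$, and then use $s$-equivariance to rule out $r=4$ outright. Every Heegaard splitting left in play is either \emph{vertical} (an $I$-bundle over a spine of the base orbifold) or \emph{horizontal} (a surface transverse to generic fibres).

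With $r=3$ and $s$-equivariance in hand, I would analyse the two types of splittings separately. In the vertical case, the extra handle must be carried by the exceptional fibre of smallest order, which is where $\alpha_1=2$ appears; reading off the $\ZZ/2\ZZ$-homology of $M$ and the requirement that the Heegaard splitting descend to a bridge presentation of $K$ forces $\alpha_2\alpha_3$ to be odd, giving family (2)(a). In the horizontal case, the existence of a horizontal surface of genus $2$ is extremely restrictive: the base orbifold must admit a degree-three branched cover by a once-punctured genus-two surface that is compatible with $s$, and this forces $\alpha_1=\alpha_2=3$ with $\beta_1=\beta_2$ and $\gcd(\alpha_3,3)=1$. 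The Euler number identity $e=\pm 1/(3\alpha_3)$ then falls out as the precise arithmetic condition for this branched cover to extend to the whole Seifert fibration, recovering family (2)(b).

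The \textbf{main obstacle} I expect is this horizontal case. Ruling out $r=4$ and verifying the Euler number formula both come down to an equivariant study of how candidate horizontal Heegaard surfaces interact simultaneously with the Seifert fibration and with the covering involution $s$; in particular, showing that no exotic Euler number can occur requires a careful enumeration of possible degree-three branched covers of $\S^2(3,3,\alpha_3)$ by genus-two surfaces and of the lifts of the Montesinos involution. This equivariant horizontal analysis, rather than either the sufficiency constructions or the elimination of large $r$, is where I would expect the real work to sit.
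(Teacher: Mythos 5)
This is a statement the paper imports verbatim from Klimenko--Sakuma \cite{KS}; the paper contains no proof of it, so there is nothing internal to compare your argument against. For the record, the route actually taken in \cite{KS} is algebraic rather than the Heegaard-theoretic one you propose: tunnel number one forces the $\pi$-orbifold group of $K$ (the quotient of $\pi_1$ of the double branched cover by the covering involution) to be $2$-generated, and for a Montesinos knot this group surjects onto the orbifold group of the hyperbolic base $\S^2(\alpha_1,\dots,\alpha_r)$, a discrete subgroup of $Isom({\mathbf H}^2)$ containing orientation-reversing elements. The theorem is then read off from their classification of $2$-generator discrete subgroups of this kind, which is where the precise congruences and the Euler number condition come from.

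As a blind attempt your outline has real gaps. First, your necessity argument tries to extract the arithmetic conditions (e.g.\ $\beta_1=\beta_2$, $e=\pm1/(3\alpha_3)$) from a classification of vertical and horizontal Heegaard splittings of $M=M(K,2)$; but $M$ does not determine $K$ --- that non-uniqueness is the subject of the whole paper --- so any argument that forgets the involution $s$ at the stage where the invariants are pinned down cannot distinguish the Montesinos knots sharing a given Seifert fibred $M$, and cannot produce conditions on the $\beta_i$ of $K$ rather than on the Seifert invariants of $M$. You would need to classify splittings equivariantly and track how $s$ acts on them, which is precisely the step you defer. Second, "use $s$-equivariance to rule out $r=4$ outright" is not an argument; the clean reason is that a Montesinos knot with $r\ge4$ tangles is Conway reducible, while tunnel number one knots admit no essential meridional planar surfaces (Gordon--Reid \cite{GR}, quoted in the introduction of this paper). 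Third, on the sufficiency side, the assertions that the parity of $\alpha_2\alpha_3$ and the Euler number hypothesis are "where the verification enters" are placeholders: exhibiting an arc and checking that the complement of $N(K\cup\tau)$ is a genus-two handlebody is the entire content of that direction, and nothing in your sketch does it.
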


As in the previous subsection, we can assume that $K$ is not a $2$-bridge knot.
As a consequence its double branched cover can only be the double branched
cover of another Montesinos knot or a torus knot. Since a Seifert fibred space
with base $\S^2$ and (at most) three exceptional fibers is the double branched
cover of a unique Montesinos link, $K$ is not determined by its double branched
cover if and only if its double branched cover is also the double branched
cover of a torus knot. In order to determine which of the knots listed in the 
theorem above admit torus knots as $2$-twins, it is necessary to understand
the Seifert invariants of the double covers of torus knots. These were
determined in \cite[Theorem 1, page 13]{NR}, in fact for branched covers of any 
order.

\begin{Proposition}[\cite{NR}] \label{p:torusKcovers}
Let $K$ be the torus knot $T(p,q)$, with $p<q$ coprime. Assume that $K$ is not
a $2$-bridge knot, i.e. $p>2$. Choose integers $x$ and $y$ so that $yp+xq=-1$.
The double branched cover $M$ of $K$ is a Seifert fibred manifold with base
$\S^2$ and precisely three exceptional fibres. More precisely, the Seifert
invariants of $M$ are:
\begin{enumerate}
\item[(1)] If $p$ and $q$ are both odd, then the fibres are of type $(2,d)$,
$(p,kx)$, and $(q,ky)$, where $d$ is any odd number and $k$ is determined by
the condition $pqd=1-2k$; the Euler number is $1/(2pq)$.
\item[(2)] If $p=2k$ is even, then the fibres are of type $(k,x)$, $(q,y)$, and
$(q,y)$; the Euler number is $1/(kq)$.
\item[(3)] If $q=2k$ is even, then the fibres are of type $(p,x)$, $(p,x)$, and
$(k,y)$; the Euler number is $1/(kp)$.
\end{enumerate}  
\end{Proposition}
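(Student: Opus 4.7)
The plan is to realise $M$ as the link of the Brieskorn singularity $\{z_1^2+z_2^p+z_3^q=0\}\subset\CC^3$, equipped with the canonical Seifert fibration coming from the $S^1$-action on $\CC^3$ with weights $(a/2,a/p,a/q)$, where $a=\operatorname{lcm}(2,p,q)$. The three cases in the statement correspond to the three parity configurations of $(2,p,q)$, which govern both the weights and the isotropies of this $S^1$-action along the coordinate loci $\{z_i=0\}\cap M$.

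When $p$ and $q$ are both odd, the triple $(2,p,q)$ is pairwise coprime, $a=2pq$, and the weights are $(pq,2q,2p)$. A direct computation of isotropy subgroups shows that each coordinate locus $\{z_i=0\}\cap M$ is a single orbit with stabilizer $\ZZ_2$, $\ZZ_p$ or $\ZZ_q$ respectively, producing three exceptional fibers of the stated multiplicities. When $p=2k$ is even (so $q$ is odd), the correct weights become $(kq,q,2k)$: the branch locus $\{z_1=0\}$ is now a regular orbit (its would-be stabilizer $\ZZ_{\gcd(q,2k)}$ is trivial), the locus $\{z_2=0\}$ yields a single exceptional fiber of multiplicity $k$, and the locus $\{z_3=0\}$ breaks into two disjoint orbits corresponding to the two algebraic branches $z_1=\pm i\,z_2^k$ of the equation $z_1^2+z_2^{2k}=0$, each contributing an exceptional fiber of multiplicity $q$. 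Case~(3) is symmetric with $p$ and $q$ swapped.

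The second Seifert invariants and the Euler number are then extracted from the standard formulas for Brieskorn manifolds, yielding the Euler numbers $1/(2pq)$, $1/(kq)$ and $1/(kp)$ in the three cases respectively. The B\'ezout identity $yp+xq=-1$ provides a canonical choice of framing for the exceptional orbits arising from $\{z_2=0\}$ and $\{z_3=0\}$; translating these framings into standard Seifert notation gives the second invariants $(p,kx),(q,ky)$ in case~(1) and $(p,x),(q,y)$ in cases~(2)--(3). In case~(1) the additional integer $d$ is a free odd parameter, coupled to $k$ by the consistency condition expressing that the sum of $\beta_i/\alpha_i$ matches the Euler number $1/(2pq)$; this simplifies, using $yp+xq=-1$, precisely to $pqd=1-2k$. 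In cases~(2) and (3) the two equal exceptional fibers receive identical invariants because they are exchanged by the deck involution $z_1\mapsto -z_1$ of the branched cover $M\to\S^3$.

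The principal technical point is the case analysis when $(2,p,q)$ is not pairwise coprime: the number of components of each locus $\{z_i=0\}\cap M$, the generic isotropy on each component, and the correct $S^1$-action weights all depend on the parities of $p$ and $q$, so one must carry out these computations carefully and then convert the resulting orbit data into the classical $(\alpha_i,\beta_i)$-presentation of Seifert invariants with the correct signs and normalisations. This is essentially the content of \cite[Theorem~1]{NR}.
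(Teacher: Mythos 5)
The paper does not actually prove this proposition: it is quoted from \cite{NR} (Theorem 1 there), so there is no internal argument to compare yours against, and your Brieskorn-manifold route is a legitimate, standard way to establish it. Your qualitative computations all check out: the identification of $M$ with the link of $z_1^2+z_2^p+z_3^q=0$ with deck transformation $z_1\mapsto -z_1$, the lcm-normalised weights, the isotropy groups along the coordinate loci, the count of components of $\{z_i=0\}\cap M$ (one, one, one when $pq$ is odd; one, one, two when $p$ or $q$ is even), and the use of the deck involution to force the two multiplicity-$q$ (resp.\ multiplicity-$p$) fibres to carry equal invariants --- which is genuinely needed, since the homological constraint only determines $\beta_2+\beta_3$ modulo $\alpha$. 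The one place where the write-up stops short is exactly where the quantitative content of the statement lives: you defer the $\beta_i$ and the Euler numbers to ``the standard formulas for Brieskorn manifolds'' without stating or applying them. To close this, one invokes the Neumann--Raymond description, which (in the paper's convention $e=b-\sum\beta_i/\alpha_i$, with $e=1/(\alpha_1\alpha_2\alpha_3)$ up to the orientation issue you wave at) pins down each $\beta_i$ by the congruence $\beta_i\prod_{j\ne i}\alpha_j\equiv -1\pmod{\alpha_i}$, and then verifies that the stated values satisfy these congruences via $yp+xq=-1$ and $2k\equiv 1\pmod{pq}$: for instance $2q\cdot kx=(2k)(qx)\equiv 1\cdot(-1)=-1\pmod p$ in case (1), and $2k\cdot y=py\equiv -1\pmod q$ in case (2). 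These verifications are routine (I checked they work), so your proposal is correct in outline but incomplete at the step that actually produces the numbers appearing in the statement, and it leaves the sign/orientation normalisation --- which the paper explicitly flags by restricting to right-handed torus knots --- unaddressed.
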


Note that the above proposition only takes into account right-handed torus
knots: the covers of their mirror images are obtained by changing the
orientation of the manifold, resulting in a change of sign of all Seifert
invariants.

Comparing the lists in the above results we have:

\begin{Theorem}
Let $K$ be a tunnel number one Montesinos knot. Assume that $K$ is a knot of 
type (2a) in the Klimenko-Sakuma classification, then $K$ is not determined by 
its double branched cover if and only if, up to changing the signs of its 
Seifert invariants, one has 
\begin{itemize}
\item[(2a-1)] $\alpha_2$ and $\alpha_3$ are coprime, 
$(\alpha_2,\alpha_3)\neq(3,5)$, 
$2\alpha_3\beta_2\equiv -1$ mod $\alpha_2$, $2\alpha_2\beta_3\equiv -1$ mod 
$\alpha_3$, and the Euler number is $1/(2\alpha_2\alpha_3)$; 
\item[(2a-2)] $\alpha_2=\alpha_3>4$ is odd, $4\beta_2=4\beta_3\equiv -1$ mod 
$\alpha_2=\alpha_3$, and the Euler number is $1/(2\alpha_2)$.
\end{itemize} 
Assume that $K$ is a knot of type (2b) in the Klimenko-Sakuma classification,
then $K$ is not determined by its double branched cover if and only if, up to 
changing the signs of its Seifert invariants one has 
\begin{itemize}
\item[(2b-1)] $\alpha_3\beta_1\equiv 1$ mod $3$, $3\beta_3\equiv -1$ mod 
$\alpha_3$, and the Euler number is $1/(3\alpha_3)$.
\end{itemize}
\end{Theorem}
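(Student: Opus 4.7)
The plan is to apply the reduction made just above the statement: a tunnel number one Montesinos knot $K$ which is not a $2$-bridge knot fails to be determined by its double branched cover if and only if $M=M(K,2)$ is also the double branched cover of a torus knot. Since a Seifert fibration of $M$ over $\S^2$ with three exceptional fibres is unique, this reduces to matching the Seifert invariants coming from the Montesinos presentation with the list produced in Proposition~\ref{p:torusKcovers}, allowing a global change of orientation (which accounts for the phrase ``up to changing the signs of the Seifert invariants''). Concretely, for a knot of type (2a) the orders of the exceptional fibres of $M$ are $(2,\alpha_2,\alpha_3)$ with $\alpha_2\alpha_3$ odd, while for a knot of type (2b) they are $(3,3,\alpha_3)$ with the first two $\beta$'s equal and $\alpha_3\ge 4$ coprime to $3$.

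Next I would go through the three cases of Proposition~\ref{p:torusKcovers} and decide which ones can produce matching orders. For type (2a) only cases (1) and (2) produce a fibre of order $2$. Case (1), with $p,q$ odd, forces $\{p,q\}=\{\alpha_2,\alpha_3\}$ coprime with $p\neq q$: reducing $yp+xq=-1$ modulo $p$ and modulo $q$ and using $pqd=1-2k$ to obtain $2k\equiv 1$ modulo both $p$ and $q$ yields the two congruences of (2a-1). Case (2) forces $p=4$, $k=2$, so the two equal fibres have order $q=\alpha_2=\alpha_3>4$ odd; reducing $4y+xq=-1$ modulo $q$ yields the congruence $4\beta_2=4\beta_3\equiv -1\pmod{\alpha_2}$ of (2a-2). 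Case (3) cannot occur, as it would require $p=2$. For type (2b) only case (3) produces two equal fibres of order $3$ (case (1) has a fibre of order $2$, and in case (2) the two equal fibres have order $q>p\ge 2$, forcing $q=3$ and $p=2$, a $2$-bridge knot). Hence $p=3$ and $k=\alpha_3\ge 4$ coprime to $3$, and reducing $3y+2xk=-1$ modulo $3$ (using $2^{-1}\equiv 2\pmod 3$) and modulo $\alpha_3$ yields the two congruences of (2b-1). In every case the stated Euler numbers are read off directly from Proposition~\ref{p:torusKcovers}.

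For the converse, in each listed case Proposition~\ref{p:torusKcovers} supplies a torus knot $T$ with the matching Seifert data; provided $K$ is not equivalent to $T$, this torus knot is a genuine $2$-twin of $K$, so $K$ is not determined by $M$. This is exactly why the exclusions $(\alpha_2,\alpha_3)\neq(3,5)$ in (2a-1) and $\alpha_2=\alpha_3>4$ in (2a-2) appear: the corresponding candidate twins, namely $T(3,5)$ and $T(3,4)$, are themselves determined by their double branched covers (by the torus knot subsection above), so a Montesinos knot with the matching Seifert data must coincide with them and is therefore also determined. Outside these excluded ranges the torus knots $T(\alpha_2,\alpha_3)$, $T(4,\alpha_2)$ and $T(3,2\alpha_3)$ are not determined by their double branched covers --- their Montesinos $2$-twins are hyperbolic --- so $K\neq T$ and the converse holds. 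The main obstacle is the careful bookkeeping of Seifert invariants, in particular the mod $\alpha_i$ reductions and the sign conventions inherent to orientation reversal, together with isolating the degenerate small cases where the candidate torus knot coincides with $K$ itself.
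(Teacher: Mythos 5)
Your overall strategy is the same as the paper's: reduce to deciding when $M(K,2)$ is also the double branched cover of a torus knot, match the Seifert data of the Montesinos presentation against the three cases of Proposition~\ref{p:torusKcovers} (up to mirror image), and in the converse direction check that the candidate torus knot is genuinely inequivalent to $K$ by verifying that $K$ is hyperbolic outside the small exceptional cases. The congruence computations you sketch for (2a-1), (2a-2) and (2b-1) are exactly the ones the paper carries out, and your treatment of type (2b) matches the paper's.

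There is, however, one concrete error in your case analysis for type (2a): you assert that case (3) of Proposition~\ref{p:torusKcovers} ``cannot occur, as it would require $p=2$.'' This is false. In case (3) the exceptional fibres have orders $p$, $p$, $k$ with $q=2k$, and the order-$2$ fibre demanded by a type (2a) knot need not be one of the $(p,x)$ fibres: taking $k=2$ gives $q=4$, and since $2<p<q$ with $p$ odd this forces $p=3$. So $T(3,4)$ is a genuine candidate whose double branched cover has fibres of orders $2,3,3$, and the case must be examined. The paper does exactly this and finds that the matching Montesinos invariants are $(0;(2,1),(3,-1),(3,-1))$, i.e.\ the knot is $T(3,4)$ itself, so the case contributes nothing to the list --- consistent with (2a-1) requiring $\alpha_2$ and $\alpha_3$ coprime and (2a-2) requiring $\alpha_2=\alpha_3>4$. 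Your later observation that $T(3,4)$ is determined by its double branched cover would in fact dispose of this case, but it sits in open contradiction with your claim that the case cannot arise; as written, the ``only if'' direction contains an unjustified exclusion. A smaller quibble: in case (2) the inequality $q>p=4$ is automatic, so the exclusion $\alpha_2=\alpha_3>4$ in (2a-2) is not really ``explained'' by $T(3,4)$ being determined --- that knot only enters through case (3), the one you discarded.
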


\begin{proof}
%
Assume that $K$ is a knot of type (2a) in the Klimenko-Sakuma classification.
Then the double branched cover $M$ of $K$ is a Seifert fibered space over $\S^2$ 
with three exceptional fibers of type $(2,1)$, $(\alpha_2, \beta_2)$ and 
$(\alpha_3,\beta_3)$ where $\alpha_2\alpha_3$ is odd. Let $e$ be the Euler 
number of $M$. If $K$ admits a $2$-twin $K'$, then by arguments in the previous 
subsection on involutions of $M$ it can be seen that $K'$ must be a torus knot 
$T(p,q)$ for some $p,q$. (We may assume $p<q$.)

Assume that $p$ and $q$ are both odd. By Proposition~\ref{p:torusKcovers} (1), 
putting $d=1$, we have
$$
\alpha_2=p,\ \alpha_3=q,\ \beta_2\equiv kx\pmod{\alpha_2},\ 
\beta_3\equiv ky\pmod{\alpha_3},\ e=1/(2\alpha_2\alpha_3)
$$
where $x$, $y$ satisfy $y\alpha_2+x\alpha_3=-1$ and $k$ satisfies 
$\alpha_2\alpha_3=1-2k$.
By the first two equalities, $\alpha_2$ and $\alpha_3$ must be coprime. 
By the third equality and the conditions on $x$, $y$, $k$, we have
$$
2\alpha_3\beta_2\equiv 2\alpha_3\cdot kx= 2k\cdot x\alpha_3\equiv 1\cdot 
(-1)=-1  \pmod{\alpha_2}, 
$$
and similarly we obtain $2\alpha_2\beta_3\equiv -1\pmod{\alpha_3}$ by the 
fourth equality and the conditions on $x$, $y$, $k$.

Conversely, if the condition (2a-1) is satisfied, then it can be seen that $K$ 
admits $T(\alpha_2, \alpha_3)$ as a $2$-twin since $K$ is hyperbolic when 
$(\alpha_2,\alpha_3)\ne (3,5)$ in this case. 
When $(\alpha_2,\alpha_3)= (3,5)$, it can be seen that $K$ is a Montesinos knot 
with invariants of the form $(0; (2,1), (3,-1), (5,-1))$ and is equivalent to 
$T(3,5)$.

Assume that $p=2k$ is even. Note that $q>p\ge 4$ and must be odd. 
By Proposition \ref{p:torusKcovers} (2), we have $k=2$ and 
$$
\alpha_2=\alpha_3=q,\ \beta_2=\beta_3\equiv y\pmod{\alpha_2=\alpha_3},\ 
e=1/(2\alpha_2),
$$
where $y$ satisfies $4y\equiv -1\pmod{\alpha_2=\alpha_3}$.
Thus we obtain the condition (2a-2).

Conversely, if the condition (2a-2) is satisfied, then it can be seen that $K$ 
admits a $2$-twin $T(4, \alpha_2)$ since $K$ is hyperbolic in this case.

Assume that $q=2k$ is even. Note that $p$ must be odd. 
By Proposition~\ref{p:torusKcovers} (3), we have $k=2$, that is $q=4$, and 
$$
\alpha_2=\alpha_3=p=3,\ \beta_2=\beta_3\equiv x\pmod{3},\ e=1/2p=1/6,
$$
where $x$ satisfies $4x\equiv -1\pmod{3}$ and hence $x\equiv -1\pmod{3}$.
In this case, $K$ is a Montesinos knot with invariants of the form
$(0; (2,1), (3,-1), (3,-1))$ and is equivalent to $T(3,4)$.

Assume that $K$ is a knot of type (2b) in the Klimenko-Sakuma classification.
Then the double branched cover $M$ of $K$ is a Seifert fibered space over $\S^2$ with 
three exceptional fibers of type $(3,\beta_1)$, $(3, \beta_1)$ and 
$(\alpha_3,\beta_3)$, where $\alpha_3\ge 4$ is not divisible by $3$, and the 
Euler number $e=\pm 1/(3\alpha_3)$. If $K$ admits a $2$-twin $K'$, then again 
$K'$ must be a torus knot $T(p,q)$ for some $p,q$. (We may assume $p<q$.)
By comparing the Seifert invariants with those in 
Proposition~\ref{p:torusKcovers}, we can see that $p=3$ and $q=2k$ is even, and 
have
$$
\beta_1\equiv x\pmod{3},\ \alpha_3=k,\ \beta_3\equiv y\pmod{\alpha_3},\ 
e=1/(3k),
$$
where $x$ and $y$ satisfy $3y+2\alpha_3x=-1$. The first and the third 
equalities together with the above condition on $x$ and $y$ imply
$2\alpha_3\beta_1\equiv -1\pmod{3}$, or equivalently 
$\alpha_3\beta_1\equiv 1\pmod{3}$, and $3\beta_3\equiv -1\pmod{\alpha_3}$.

Conversely, if the condition (2b-1) is satisfied, then it can be seen that $K$ 
admits a $2$-twin $T(3, 2\alpha_3)$ since $K$ is hyperbolic (because 
$\alpha_3\ge 4$) in this case.

We also note that $K$ admits $T(p,q)$ as a $2$-twin if and only if the
mirror of $K$ admits the mirror of $T(p,q)$ as a $2$-twin. 
\end{proof}

\begin{proof}[Proof of Corollary~\ref{c:alternating}]
Recall that the Euler number $e$ is defined as $b-\sum_{i=1}^r
\beta_i/\alpha_i$. Because of the value of $e$ given in
Proposition~\ref{p:torusKcovers} it is easy to see that the rational invariants
cannot be all of the same sign.
\end{proof}

\subsection{Satellite knots}

Satellite tunnel number one knots were classified independently by Morimoto and 
Sakuma, and Eudave-Mu\~noz. It turns out that the JSJ decomposition of 
the exterior of such a knot is extremely simple and consists of just two pieces. These are the 
exteriors of a two-component $2$-bridge link $L$ (different from the trivial 
link and the Hopf link) and of a non trivial torus knot $T(p,q)$ in such a way 
that the fibre of the fibration of the torus knot is identified with the 
meridian of one of the two components of $L$. 

It was proved by Schubert that the bridge number of these knots is the product 
of the bridge number of $T(p,q)$ and the wrapping number of $K$ (see 
\cite{Sc} for a proof). Since both these numbers are at least $2$, we know that 
these knots have bridge index at least four. On the other hand,  these knots 
are known to be (1,1) by \cite[Thorem 2.1]{MS}. These facts together with the 
last assertion of Theorem~\ref{t:main} imply that these knots are never 
determined by their double branched covers. In the rest of this section, we 
give an alternative proof of this conclusion and discuss more precisely what 
$2$-twins these knots admit. 

The following fact will be useful in the sequel.

\begin{Lemma}\label{l:2bridge}
Let $L=L_1\cup L_2$ be a $2$-bridge link of type $(2\alpha,\beta)$. Consider 
$M(L_1,2)$ and let $L'$ be the lift of $L_2$ to $M(L_1,2)$. Then $M(L_1,2)$ is 
the $3$-sphere and $L'$ is a $2$-bridge knot or a $2$-component $2$-bridge link according to whether the linking number of $L_1$ and $L_2$ is odd or even. 
Moreover $L'$ is hyperbolic if and only if $\beta\not\equiv \pm 1\pmod{\alpha}$.
\end{Lemma}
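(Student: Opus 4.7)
The plan is to combine standard facts about 2-bridge links with a careful analysis of the cover $M(L_1,2)\to\S^3$. For the first two assertions, since each component of a 2-bridge link is the unknot, $M(L_1,2)=\S^3$; and by the standard criterion for lifting curves to branched covers, $L'$ is connected precisely when the class of a meridian of $L_2$ in $\pi_1(\S^3\setminus L_1)\cong\ZZ$ does not lie in the kernel of the quotient map $\ZZ\to\ZZ/2\ZZ$, equivalently when the linking number of $L_1$ and $L_2$ is odd.

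To see that $L'$ is 2-bridge, I would place $L$ in 2-bridge position with bridge sphere $S$ meeting each component in two points. The preimage $\tilde S$ is again a sphere (the 2-fold cover of $\S^2$ branched at two points), and each 2-bridge ball $B$ lifts to a ball $\tilde B$ (the 2-fold cover of a ball branched along a trivial arc). The companion arc $L_2\cap B$, being a trivial strand of the rational tangle $(B,L\cap B)$, bounds in $B$ a disk $D$ disjoint from $L_1\cap B$; since $D$ is simply connected and disjoint from the branch locus, it lifts to two disjoint disks in $\tilde B$, exhibiting $L'\cap\tilde B$ as a trivial 2-string tangle. Thus $\tilde S$ puts $L'$ in 2-bridge position inside $\S^3$.

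For the hyperbolicity criterion I would identify $M(L',2)$ explicitly. The composition $M(L',2)\to M(L_1,2)\to\S^3$ is a regular 4-fold cover of $\S^3$ branched along $L$ with ramification index two along each component, and the two deck transformations $\sigma_1,\sigma_2$ coming from the intermediate 2-fold covers generate a $\ZZ/2\ZZ\oplus\ZZ/2\ZZ$ covering group. A local check at the preimage of a point of $L_i$ shows its stabiliser to be $\langle\sigma_i\rangle$, so the diagonal involution $\sigma_1\sigma_2$ acts freely; hence the quotient $M(L',2)\to M(L,2)=L(2\alpha,\beta)$ is an unbranched double cover, which must be the unique such cover, namely $L(\alpha,\beta)$. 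Since 2-bridge knots and links are determined (up to mirror image) by their double branched covers, $L'$ is the 2-bridge knot or link of type $(\alpha,\beta)$, and the classical hyperbolicity criterion for 2-bridge knots and links then yields hyperbolicity exactly when $\beta\not\equiv\pm1\pmod\alpha$.

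The main obstacle is this last step: one must verify both the regularity of the composed 4-fold cover and the freeness of the diagonal involution through the ramification analysis in order to pin down $M(L',2)=L(\alpha,\beta)$.
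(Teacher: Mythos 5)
Your argument is correct, but it reaches the key identification of $L'$ by a genuinely different route from the paper. The paper works entirely diagrammatically: it places $L$ in four-plat position, observes from the picture that the lift of $L_2$ is again a four-plat, reads off that the continued fraction $[2a_1,a_2,\dots,2a_n]$ for $\beta/2\alpha$ becomes $[a_1,2a_2,\dots,a_n]=\beta/\alpha$ upstairs, and gets the knot/link dichotomy from the formula $lk(L_1,L_2)=a_1+a_3+\cdots+a_n$. You instead lift the bridge sphere to show $L'$ is $2$-bridge (your bridge-disc argument is a clean substitute for the paper's appeal to the figure), and then pin down its type by covering-space theory: $M(L',2)$ is the total space of a regular $(\ZZ/2\ZZ)^2$-cover of $\S^3$ branched over $L$, the diagonal involution acts freely, so $M(L',2)$ is the unique connected double cover of $L(2\alpha,\beta)$, namely $L(\alpha,\beta)$, and Hodgson--Rubinstein/Schubert then give $L'=(\alpha,\beta)$ up to the equivalence $\beta\mapsto\pm\beta^{\pm1}$, which the hyperbolicity criterion respects. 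The step you flag as the main obstacle does go through: define the four-fold cover directly as the one associated to the epimorphism $\pi_1(\S^3\setminus L)\to\ZZ/2\ZZ\oplus\ZZ/2\ZZ$ sending a meridian of $L_i$ to $\sigma_i$; its quotients by $\langle\sigma_2\rangle$, $\langle\sigma_1\sigma_2\rangle$ are visibly $M(L_1,2)$ and $M(L,2)$, the top space is $M(L',2)$, and the local stabiliser over $L_i$ is $\langle\sigma_i\rangle$, so $\sigma_1\sigma_2$ is free. Your approach trades the paper's explicit (and more informative) plat picture for a softer argument that avoids continued-fraction bookkeeping but leans on the classification of lens spaces and of $2$-bridge links by their double branched covers. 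One slip to fix: the connectedness of $L'$ is governed by the class of $L_2$ itself (not of a meridian of $L_2$, which is nullhomotopic in $\S^3\setminus L_1$) in $H_1(\S^3\setminus L_1)\cong\ZZ$; that class is $lk(L_1,L_2)$, so your stated conclusion is the right one.
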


\begin{proof}
The components of $L$ are both knots of bridge index $1$, so they are both
trivial. It follows at once that $M(L_1,2)$ is the $3$-sphere. Since $L$ admits
a symmetry (a $\pi$-rotation) that exchanges its components, we also see that
the lift of $L_2$ to $M(L_1,2)$ coincides with the lift of $L_1$ to $M(L_2,2)$.
Considering now a four-plat position for $L$, it is straightforward to see that 
$L'$ also admits a four-plat presentation (see Figure \ref{fig_2blink_cover}), 
so that $L'$ is a $2$-bridge link. 
Note that $\beta/2\alpha$ admits a continued fraction of the form
$$
\frac{\beta}{2\alpha}= \frac{1}{2a_1+\displaystyle\frac{1}
{a_2+\displaystyle\frac{1}{\dots+\displaystyle\frac{1}{2a_n}}}},
$$
for some odd number $n$, and that the linking number of $L_1$ and 
$L_2$ is $a_1+a_3+\cdots+a_n$. One can see from Figure \ref{fig_2blink_cover} 
that $L'$ is a knot or a $2$-component link according to whether this linking number is 
odd or even. 
As seen in Figure \ref{fig_2blink_cover}, the invariant of $L'$ is
$$
\frac{1}{a_1+\displaystyle\frac{1}{2a_2+\displaystyle\frac{1}
{\dots+\displaystyle\frac{1}{a_n}}}}=\frac{\beta}{\alpha}.
$$
The last assertion of the lemma follows from the fact that a $2$-bridge link of 
type $(\alpha,\beta)$ is hyperbolic if and only if $\beta\not\equiv \pm 
1\pmod{\alpha}$ (see \cite{Me} and also \cite[Theorem A.1]{GF}).
\end{proof}
%
\begin{figure}[btp]
\begin{center}
\includegraphics*[width=7cm]{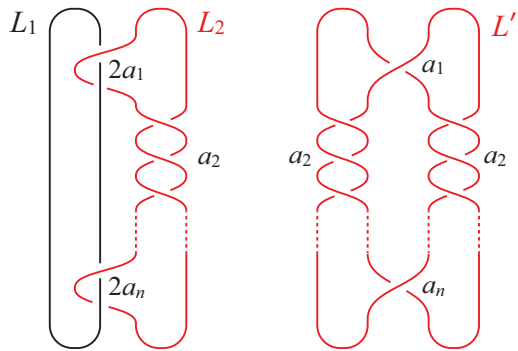}
\end{center}
\caption{The lift $L'$ of $L_2$ to $M(L_1,2)$.}
\label{fig_2blink_cover}
\end{figure}
%

Consider the double branched cover of $K$. This manifold can be obtained in the
following way. First take the double cover of the exterior of a component of
$L$ branched along the other. According to the lemma above, this cover is the 
exterior of a $2$-bridge link $L'$ with one or two components according to the 
parity of the linking number of the components of $L$. If $L'$ is a knot, we 
obtain the double branched cover of $K$ by gluing to the exterior of $L'$ the 
double (unramified) cover of the exterior of $T(p,q)$. Else, we glue on each 
boundary component of the exterior of $L'$ a copy of the exterior of $T(p,q)$. 
It now suffices to observe that the exterior of $L'$ admits an involution 
acting as a strong inversion on all components of $L'$; the double cover of the 
exterior of $T(p,q)$ admits a Montesinos type involution; $T(p,q)$ itself 
admits a strong inversion. In all cases, these involutions defined locally on 
the geometric pieces of the decomposition can be glued together to provide a 
global involution $f$ such that $(M,Fix(f))/\langle f\rangle=(\S^3,K_f)$ where 
$K_f$ is a Conway reducible (hyperbolic) knot. As such $K_f$ is necessarily a 
$2$-twin of $K$.

Observe that this shows that the $2$-twin of a tunnel number one knot need not
be a tunnel number one knot (the reader might have already remarked that this 
is also the case for some torus knots). 

On the other hand, one can prove that these knots are determined by their
double branched covers within the class of tunnel number one knots. Indeed, let
$M$ be the double branched cover of one of these knots and let $g$ be the
covering involution of a knot $K'$ such that $M(K',2)$ is diffeomorphic to $M$.
We can assume that $g$ preserves the JSJ decomposition of $M$. In particular,
either $Fix(g)$ meets some torus of the JSJ decomposition so that the $K'$ is 
Conway reducible and cannot be a tunnel number one knot, or it does not meet 
them in which case $K'$ is toroidal. It follows that if $K'$ is a tunnel number
one knot, it must belong to this family.

Now we want to show that the invariants determining $K$ can be read off the 
double cover, so that $K$ is determined within this class. Assume that $L'$
is a knot, then $L'$ is either hyperbolic or the exterior of a non
trivial torus knot whose Seifert fibration has base a disc and two exceptional 
fibres of orders $2$ and $2m+1>2$. The second piece of the JSJ decomposition of 
$M$ consists of a Seifert fibred manifold with base a disc and either two 
exceptional fibres both of odd orders (perhaps the same), or three exceptional 
fibres. 

If $L'$ is a link, the JSJ decomposition of $M$ consists of three pieces,
provided $L'$ is not the Hopf link. In any case, two of the pieces of the
decomposition are Seifert fibred with base a disc and two exceptional fibres of
coprime orders. 

Thus there is a single problematic situation, that is when $L'$ is the Hopf 
link. In this case the JSJ decomposition of $M$ consists of two copies of the
same Seifert fibred piece, i.e. the exterior of $T(p,q)$. We have to make sure 
that this double cover cannot coincide with one of the double covers where $L'$ 
is a knot. However, the two pieces of the JSJ decomposition of $M$ in the case 
where $L'$ is a knot can never be the same according to the above analysis.


\section{Two families of $\pi$-hyperbolic knots}
\label{s:two-pi}



In this section we will consider some families of $\pi$-hyperbolic tunnel number
one knots and show that generically they are not determined by their double
branched covers.

\subsection{$(1,1)$-knots of minimal Hempel distance}

Knots in $1$-bridge position with respect to genus one Heegaard 
splittings with Hempel distance at most 2 were studied by Saito in \cite{Sa}. 
If $K$ is a hyperbolic knot in this class, then Saito shows  that its exterior 
is obtained by Dehn filling a component of the exterior of a (necessarily 
hyperbolic) $2$-bridge link $L$. Note that, since both components of $L$ are 
trivial, infinitely many Dehn fillings give the exterior of a knot in $\S^3$;
moreover, by Thurston's hyperbolic Dehn filling theorem, all but finitely many
of these give a hyperbolic knot. As in the previous section, let $L'$ be the
link obtained by lifting one component of $L$ to the double branched cover of the other. As we have seen in Lemma~\ref{l:2bridge}, $L'$ is 
also hyperbolic for most choices of $L$: in this case, a sufficiently large 
surgery on a component of $L$ will result in a $\pi$-hyperbolic knot $K$.

\begin{Proposition}\label{p:hempel2}
Let $K$ be a $\pi$-hyperbolic knot constructed as above by Dehn surgery on 
some $2$-bridge link $L$ so that $L'$ is hyperbolic. If $K$ is obtained from a 
sufficiently large Dehn filling on a component of $L$, then $K$ is not 
determined by its double branched cover. 
\end{Proposition}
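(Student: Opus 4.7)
The plan is to produce an explicit $2$-twin of $K$ by lifting a strong inversion of the $2$-bridge link $L$ to the double branched cover $M$ and showing that the resulting involution gives a different quotient orbifold. The argument follows the template of Reni and Zimmermann \cite{RZ}.

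First I would realise $M=M(K,2)$ as a Dehn filling on the exterior of the hyperbolic link $L'$, following the construction already used in the satellite case of Section~\ref{s:easycases}. The double branched cover of $S^3$ along $L_2$ is $S^3$ containing the lifted link $L'=\pi^{-1}(L_1)$ by Lemma~\ref{l:2bridge}, and $M$ is obtained from this $S^3$ by Dehn filling on $L'$ with the slope that lifts the surgery slope on $L_1$. Since $L'$ is hyperbolic by hypothesis, Thurston's hyperbolic Dehn filling theorem ensures that $M$ is hyperbolic for all sufficiently large surgery coefficients, and that every minimally twisted filling preserves a controlled subgroup of $\mathrm{Isom}(S^3\setminus L')$.

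Next I would exploit a strong inversion $\rho$ of the $2$-bridge link $L$, namely a $\pi$-rotation whose axis meets each component of $L$ transversally in two points. Because $\rho$ preserves $L_2$, it commutes with the covering involution $\iota$ of the branched cover $\pi\colon\Sigma\to S^3$ along $L_2$, and so lifts to two involutions $\tilde\rho,\iota\tilde\rho$ of $\Sigma=S^3$. Because $\rho$ also preserves the original surgery data on $L_1$, the lift $\tilde\rho$ (with fixed-point set containing the lift of the axis) extends over the Dehn filling to an involution $\hat\rho$ of $M$ commuting with the covering involution $s$. Smith theory applied to the $\ZZ/2\ZZ$-homology sphere $M$ forces $\mathrm{Fix}(\hat\rho)$ to be a single circle, and a direct analysis of the quotient piece-by-piece (quotienting the exterior of $L'$ and the filling solid torus equivariantly) shows that $M/\hat\rho\cong S^3$. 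Hence $(M,\mathrm{Fix}(\hat\rho))/\hat\rho=(S^3,K')$ for some knot $K'$ with $M(K',2)\cong M$.

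Finally I would argue that $K'\not\equiv K$ by invoking Mostow rigidity. For sufficiently large filling, every orientation-preserving isometry of $M$ is induced via the cores of the filling tori by an isometry of $S^3\setminus L'$ that preserves the filling slope. In particular $\mathrm{Isom}(M)$ sits inside the (finite, elementary abelian) symmetry group of the hyperbolic $2$-bridge link $L'$. The two involutions $s$ and $\hat\rho$ correspond to genuinely different generators of this group: $s$ comes from the covering involution $\iota$ of the branched cover $\Sigma\to S^3$ along $L_2$, while $\hat\rho$ comes from the strong inversion of $L$. Since the symmetry group is abelian, $s$ and $\hat\rho$ cannot be conjugate within $\mathrm{Isom}(M)$, so the two quotient orbifolds $(S^3,K(\pi))$ and $(S^3,K'(\pi))$ are not orbifold-equivalent, which yields $K\not\equiv K'$.

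The main obstacle will be exactly this last step: verifying non-equivalence of $K$ and $K'$. For small surgery coefficients, $M$ could acquire accidental extra isometries conjugating $s$ with $\hat\rho$, collapsing the two quotient knots into a single one. The hypothesis that the Dehn filling be \emph{sufficiently large} is precisely what rules this out, because it forces, via the hyperbolic Dehn filling theorem and Mostow rigidity, a tight identification between $\mathrm{Isom}(M)$ and the symmetries of the hyperbolic link $L'$ preserving the filling slope.
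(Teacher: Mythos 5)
Your overall strategy is the same as the paper's: describe $M$ as a (large) Dehn filling on the exterior of the hyperbolic $2$-bridge knot or link $L'\subset \S^3$, produce a second involution with quotient $(\S^3,K')$ from a strong inversion, and use hyperbolic rigidity for large fillings to compare it with the covering involution $s$. Your construction of $\hat\rho$ (a strong inversion of $L$ lifted through the branched cover and extended over the filling) is a slightly more roundabout version of the paper's choice of a symmetry of $L'$ acting as a strong inversion on each of its components, but it amounts to the same involution, modulo checking that the lift you pick really has fixed points on each component of $L'$ rather than acting freely on (or permuting) them.

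The genuine gap is in the last step. You deduce that $s$ and $\hat\rho$ are non-conjugate from the claim that $\mathrm{Isom}(M)$ embeds in the symmetry group of $L'$, which you assert is finite elementary abelian. That assertion is unjustified and false in general: hyperbolic $2$-bridge knots and links can have non-abelian symmetry groups (the figure-eight knot, whose full symmetry group is dihedral of order $8$, is already a counterexample), and even for the subgroup of orientation-preserving, slope-preserving symmetries you have not verified abelianness --- nor have you shown that $s$ and $\hat\rho$ are actually distinct elements of this group rather than merely differently described ones. The paper closes exactly this point with a concrete conjugation invariant: for sufficiently large fillings the cores of the filling solid tori are the unique shortest geodesics of $M$, hence preserved by every isometry; $s$ acts on these cores without fixed points (they are disjoint from the branch locus $\mathrm{Fix}(s)$, so $s$ either rotates a single core freely or exchanges two cores), whereas $\hat\rho$, being a strong inversion on each component of $L'$, fixes two points on each core. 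Since the fixed-point behaviour on a canonically determined collection of geodesics is preserved under conjugation, $s$ and $\hat\rho$ cannot be conjugate in $\mathrm{Isom}(M)$, and no hypothesis on the structure of the symmetry group of $L'$ is needed. You should replace the abelianness claim with this (or an equivalent) invariant.
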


\begin{proof}
The double branched cover $M$ of $K$ is obtained by Dehn surgery on the 
components of the hyperbolic knot or link $L'$. If this surgery is sufficiently 
large, the core or cores of the surgery are the unique shortest geodesics of 
the hyperbolic manifold $M$. It follows that every hyperbolic isometry of $M$
leaves the core or cores of the Dehn surgery invariant and thus induces a
symmetry of the $2$-bridge knot or link $L'$; moreover different isometries
induce different symmetries. Vice-versa, each symmetry of $L'$ induces an
isometry of $M$. Besides the covering involution $s$, $M$ admits an involution
$r$ induced by a symmetry of $L'$ acting as a strong inversion on each 
component of $L'$. It is easily seen that $(M,Fix(r))/\langle r
\rangle=(\S^3,K_r)$. Since the actions of $s$ and $r$ on the shortest geodesics
of $M$ are different, these elements cannot be conjugate and we can conclude
that $K$ and $K_r$ are not equivalent, as desired.  
\end{proof}

A similar argument to the one used in the proof of the proposition above, was
used by Reni and Zimmermann to provide examples of $\pi$-hyperbolic knots with 
several $2$-twins (\cite{RZ}, see also \cite{VM}).

\begin{remark}\label{r:hempel3}
We remark that the Hempel distance of any (1,1)-splitting of $K$ constructed as 
above is at most 4 regardless of the slope of the Dehn filling, which can be 
seen as follows: Let $L_1$ and $L_2$ be the two components of $L$, and 
$V_1\cup V_2$ the genus-0 Heegaard splitting of $\S^3$ that gives the 
$2$-bridge splitting of $L$. Let $D_1$ and $D_2$ be the essential discs in 
$V_1-L$ and $V_2-L$, respectively. Let $N(L_1)$ be a regular neighborhood of 
$L_1$ disjoint from $L_2$, and let $W_1:=V_1\cup N(L_1)$ and $W_2$ the closure 
of $V_2-N(L_1)$. Then $W_1\cup W_2$ is a genus-1 Heegaard splitting of $\S^3$ 
which gives a (1,1)-splitting of $L_2$. Let $D_0$ be the co-core of the 
1-handle $N(L_1)\cap V_2$. Then $D_0$ is an essential disc of $W_1-L_2$ 
disjoint from $D_1\cup D_2$, which implies that the Hempel distance of the 
above (1,1)-splitting of $L_2$ is at most 2. Let $W_1'$ be the solid torus 
obtained from $W_1$ by applying a Dehn surgery on $L_1$. Then $W_1'\cup W_2$ 
gives a (1,1)-splitting of $K$. Note that $D_1$ and $D_2$ are essential discs 
in $W_1'-K$ and $W_2-K$, respectively, since $L_1$ is disjoint from the discs, 
and that $\partial D_0$ is an essential loop on $\partial W_1'-K$ disjoint from 
$D_1\cup D_2$. Hence, the Hempel distance of the above (1,1)-splitting of $K$ 
is at most 2. This together with the main result of \cite{T} implies that the 
Hempel distance of any other (1,1)-splitting of $K$ is at most 4.
\end{remark}

\subsection{Twisted torus knots}

Twisted torus knots were introduced by Dean in \cite{D}. These knots depend on 
four parameters and are obtained by twisting $s$ times $r$ consecutive strands 
of a torus knot $T(p,q)$. As tunnel number one knots, they can be put in 
$0$-bridge position with respect to a genus two Heegaard splitting of $\S^3$, 
by construction. Unlike tunnel number one knots, though, twisted torus knots 
can be composite or, more generally, admit non trivial $n$-tangle 
decompositions (see \cite{M1, M2}). 

We are interested in the twisted torus knots that are also tunnel number one
knots. Infinitely many examples belong to this class: for instance all knots
obtained by twisting along $r=3$ strands are tunnel number one by work of Lee
\cite{L1}. Note that this condition is sufficient but not necessary (see 
\cite{L2} or \cite[Lemma 5.4]{BTZ})

We are particularly interested in those examples that are moreover
$\pi$-hyperbolic. Lee showed that these knots are hyperbolic provided $r>1$ is
not a multiple of $p$ or $q$ and $s$ is sufficiently large. Indeed, a twisted
torus knot can be seen as the result of $1/s$-Dehn surgery along a trivial knot
$C$ encircling $r$ consecutive strands of a torus knot $T(p,q)$: Lee proved
more precisely that $C\cup T(p,q)$ is a hyperbolic link \cite{L2} under the
aforementioned hypotheses.

Of course, these knots do not need to be $\pi$-hyperbolic, however they will be so if 
we can ensure that they are tunnel number one and of bridge index at least $4$,
thus excluding the Montesinos ones. It was proved in \cite{BTZ} that the 
twisted torus knot of parameters $1<r<p<q$ has bridge index precisely $p$ 
provided that $|s|>18p$. This ensures that knots of the type that we are 
interested in do exist. 

\begin{Proposition}\label{p:ttknots}
Let $T(p,q)$ be a torus knot and $C$ a trivial knot encircling $r$ consecutive
strands of $T(p,q)$. Assume that the link $C\cup T(p,q)$ is hyperbolic and
moreover the lift $\tilde C$ of $C$ to the double branched cover of $T(p,q)$ is also a hyperbolic knot or link. Then for sufficiently large 
$s$, the ($\pi$-hyperbolic) twisted torus knot $K=K(p,q;r,s)$ obtained by 
$1/s$-Dehn surgery along $C$ is not determined by its double branched cover.
\end{Proposition}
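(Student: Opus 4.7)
The plan is to argue along the lines of Proposition~\ref{p:hempel2}. Let $\Sigma$ denote the double branched cover of $\S^3$ along $T(p,q)$; it is a small Seifert fibred space over $\S^2$ with three exceptional fibres. The lift $\tilde C$ sits in $\Sigma$, and $\Sigma\setminus \tilde C$ is hyperbolic by hypothesis. Since the $1/s$-surgery on $C$ takes place away from the branch set $T(p,q)$, it lifts through the double branched cover to a Dehn surgery on $\tilde C$, and the resulting manifold is precisely $M=M(K,2)$.

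For $s$ sufficiently large, Thurston's hyperbolic Dehn surgery theorem ensures that $M$ is hyperbolic and that the core(s) $\gamma$ of the filling form the unique shortest closed geodesic(s) of $M$. Consequently every hyperbolic isometry of $M$ preserves $\gamma$ and restricts to a self-isometry of $\Sigma\setminus\tilde C$; conversely, any isometry of $\Sigma\setminus\tilde C$ compatible with the filling slope extends to an isometry of $M$. In particular, two isometries of $M$ are conjugate in $\mathrm{Isom}(M)$ only if the symmetries they induce on $\Sigma\setminus\tilde C$ are conjugate.

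Besides the covering involution $\iota$ of $M\to\S^3$ branched along $K$, we produce a second involution $u$ of $M$ with $M/u=\S^3$ and $Fix(u)$ a knot $K_u$. Since $K$ is tunnel number one, it is strongly invertible, and the construction in the proof of Theorem~\ref{t:main} yields such a $u$, acting as a hyperelliptic involution of the induced genus-$3$ (or genus-$2$) Heegaard splitting of $M$, so that $K_u$ has bridge index at most four. The symmetries of $\Sigma\setminus\tilde C$ induced by $\iota$ and by $u$ are distinct: the first is the covering involution of $\Sigma\to\S^3$ branched along $T(p,q)$, while the second descends from a lift to $\Sigma$ of the strong inversion of the link $T(p,q)\cup C$ in $\S^3$. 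Their actions on the cusps corresponding to $\tilde C$ differ, so $\iota$ and $u$ cannot be conjugate in $\mathrm{Isom}(M)$, and hence $K\neq K_u$.

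The main technical obstacle is to identify precisely which symmetry of $(\Sigma,\tilde C)$ is induced by the lifted strong inversion of $K$ and to verify its non-conjugacy with the deck transformation of $\Sigma\to\S^3$. This follows the template of Reni and Zimmermann~\cite{RZ} and reduces to an explicit analysis of the finite symmetry group of the hyperbolic link $T(p,q)\cup C$ and of how its symmetries lift through the double branched cover.
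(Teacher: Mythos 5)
Your general framework is the right one and matches the paper's: lift the $1/s$-surgery on $C$ to a surgery on $\tilde C$ in the double branched cover $\Sigma$ of $T(p,q)$, use Thurston's Dehn filling theorem to make the core(s) of the filling the unique shortest geodesic(s) of $M$ for large $s$, and conclude that two covering involutions can only be conjugate if they act in a compatible way on those cores. The problem is that the decisive step --- exhibiting a second involution and actually proving it is not conjugate to the covering involution of $K$ --- is precisely the step you defer as ``the main technical obstacle.'' As written, the proposal asserts that the two induced symmetries ``differ'' on the cusps of $\Sigma\setminus\tilde C$ but gives no argument, and then concedes that verifying this requires an analysis you have not carried out. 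That is the whole content of the proposition, so this is a genuine gap rather than a routine detail.

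The paper closes this gap by choosing the second involution differently, in a way that makes the non-conjugacy immediate. Instead of lifting a strong inversion of $K$ (which, incidentally, requires $K$ to be strongly invertible --- the proposition does not assume $K$ has tunnel number one, so you cannot invoke that), the paper takes the Montesinos involution $r$ of the Seifert fibred space $\Sigma$, chosen so that the strong inversion of $T(p,q)$ it induces downstairs also strongly inverts $C$. Then $r$ preserves $\tilde C$ acting on it \emph{with fixed points}, extends over the filling to an involution of $M$ whose quotient is $\S^3$ branched over a knot $K'$ (Montesinos trick), and acts with fixed points on the shortest geodesics. The covering involution $s$ of $M\to\S^3$ branched over $K$, by contrast, restricts on $\Sigma\setminus\tilde C$ to the deck transformation of a cover branched only over $T(p,q)$, hence acts \emph{freely} on $\tilde C$ and on the surgery cores. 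Fixed points versus free action on the distinguished shortest geodesics rules out conjugacy at once. If you want to salvage your version with the lifted strong inversion of $K$, you would have to identify that inversion with one compatible with the link $T(p,q)\cup C$ and compute its action on the cores --- which is exactly the analysis the paper's choice of involution renders unnecessary.
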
 

\begin{proof}
The proof follows the exact same lines of the proof of 
Proposition~\ref{p:hempel2}. If the surgery is sufficiently large, we can 
assume that the core or cores of the induced surgery on $\tilde C$ are the 
shortest geodesic of the double cover. Now, the double branched cover of a torus knot 
admits a Montesinos involution that induces a strong inversion of $T(p,q)$. 
Such strong inversion can be chosen (up to isotopy) so that it acts as a 
strong inversion on $C$, too. It follows that the Montesinos involution induces 
an involution of the manifold obtained by surgery on $\tilde C$ which is a deck 
transformation for a double branched cover of a knot $K'$. 
Note that the covering involutions for $K$ and $K'$ do not act in the same way
on the cores of the Dehn surgery on $\tilde C$ so they cannot be conjugate if
the cores are the shortest geodesics.
\end{proof}

The above proposition applies in particular to twisted torus knots that have
tunnel number one. Observe that among these some have bridge index $<5$
according to the discussion above.


\section{Double branched covers of $\pi$-hyperbolic knots with Heegaard splittings of
large Hempel distance}
\label{s:hempel}


This section will be devoted to the proof of Theorem~\ref{t:hempel}. Let $K$ be
a $\pi$-hyperbolic tunnel number one knot of bridge index $b\in\{3,4\}$. Let
$M$ denote its double branched cover: notice that $M$ is a hyperbolic manifold.

\subsection{The bridge index of $K$ is $3$}

Under this hypothesis $K$ is a $(1,1)$-knot, according to \cite{K}. Consider a
genus-one Heegaard splitting of the $3$-sphere with respect to which $K$ is in
$1$-bridge position. Such splitting lifts to a genus-two Heegaard splitting of
$M$, so that $M=H_1\cup_{\Sigma} H_2$, where $H_1$ and $H_2$ are genus-two
handlebodies intersecting in their common boundary $\Sigma$, a Heegaard surface
of genus $2$. As in Section~\ref{s:prfthm}, let $s$ be the involution
generating the group of deck transformations of the branched cover, and let $u$
be the lift to $M$ of a strong inversion of $K$, preserving the genus-one
Heegaard splitting of $\S^3$, chosen so that it acts as a hyperelliptic
involution of $\Sigma$ and of the splitting of $M$. We can assume that both $s$
and $u$ are isometries of the hyperbolic structure of $M$. As already remarked 
in Section~\ref{s:prfthm}, we have $(M,Fix(u))/\langle u \rangle=(\S^3,K_u)$. 
We want to show that $K_u$ is a $2$-twin of $K$. Let us assume by contradiction
that $K_u$ is equivalent to $K$, so that $s$ and $u$ are conjugate in 
$Isom^+(M)$.

Let $h\in Isom^+(M)$ such that $u=h^{-1} s h$. Consider now 
$h(H_1)\cup_{h(\Sigma)} h(H_2)$: by construction it is a genus-two Heegaard
splitting of $M$ on which $s$ acts as a hyperelliptic involution. Assume now
that the Hempel distance of $H_1\cup_{\Sigma} H_2$ and hence of
$h(H_1)\cup_{h(\Sigma)} h(H_2)$ is at least $2g+1=5$. The main result in
\cite{ST} assures that the two splittings we have are isotopic. Let $f$ be a
homeomorphism of $M$ isotopic to the identity such that $f(h(H_i))=H_i$,
$i=1,2$ and consider the homeomorphism $f s f^{-1}$: it is a hyperelliptic
involution of $\Sigma$ and the splitting. Since a surface of genus two admits a
unique hyperelliptic involution up to isotopy (see \cite{FK} for instance) we deduce that $f s f^{-1}$ and
$u$ are isotopic in $M$ and, consequently, so are $s$ and $u$. This is however
absurd since $u$ and $s$ are distinct isometries of $M$. This final
contradiction shows that $K_u$ must be a $2$-twin of $K$.

\subsection{The bridge index of $K$ is $4$}

The proof will follow the same lines of the previous case with slight
modifications. Consider a genus-two Heegaard splitting of the exterior of $K$.
As seen in Section~\ref{s:prfthm}, such splitting lifts to a genus-three 
Heegaard splitting of $M$, so that $M=H_1\cup_{\Sigma} H_2$, where $H_1$ and 
$H_2$ are genus-three handlebodies intersecting in their common boundary 
$\Sigma$ a Heegaard surface of genus $3$. Note that if this is not a minimal 
genus splitting for $M$ then $K$ is not determined by $M$, according to 
Theorem~\ref{t:main}. In this case there would be nothing to prove, so we can
assume that the Heegaard genus of $M$ is $3$. As in Section~\ref{s:prfthm}, let 
$s$ be the involution generating the group of deck transformations of the 
branched cover, and let $u$ be the lift to $M$ of a strong inversion of $K$, 
preserving the genus-two Heegaard splitting of the exterior of $K$. As remarked in Section~\ref{s:prfthm}, $u$ can be chosen so that it acts as a hyperelliptic
involution of $\Sigma$ and of the splitting of $M$. We can assume that both $s$
and $u$ are isometries of the hyperbolic structure of $M$. As already remarked
in Section~\ref{s:prfthm}, we have $(M,Fix(u))/\langle u \rangle=(\S^3,K_u)$.
If $K_u$ is a $2$-twin of $K$, again there is nothing to prove, so we can 
assume that $K_u$ is equivalent to $K$, so that $s$ and $u$ are conjugate in 
$Isom^+(M)$.

Let $h\in Isom^+(M)$ such that $u=h^{-1} s h$. Consider now
$h(H_1)\cup_{h(\Sigma)} h(H_2)$: by construction it is a genus-three Heegaard
splitting of $M$ on which $s$ acts as a hyperelliptic involution. Assume now
that the Hempel distance of $H_1\cup_{\Sigma} H_2$ and hence of
$h(H_1)\cup_{h(\Sigma)} h(H_2)$ is at least $2g+1=7$. The main result in
\cite{ST} assures that the two splittings we have are isotopic. Let $f$ be a
homeomorphism of $M$ isotopic to the identity such that $f(h(H_i))=H_i$,
$i=1,2$ and consider the homeomorphism $f s f^{-1}$: it is a hyperelliptic
involution of $\Sigma$ and the splitting. Since the genus of $\Sigma$ is $3$,
the hyperelliptic involution is not unique up to isotopy. However, since the
Hempel distance of the splitting is at least $4$, the mapping class group
$Mod(M,\Sigma)$ of homeomorphisms of $M$ preserving $\Sigma$ up to isotopies is finite according to \cite{J2}. Note that this group
contains both $u$ and $f s f^{-1}$. Now, since $Mod(M,\Sigma)$ is finite, it 
can be realised as a group of automorphisms of a complex structure on $\Sigma$:
this is a consequence of the solution to Nielsen's realisation problem, see
\cite{K1,K2}.
Such a group can contain at most one hyperelliptic element. It follows that
$f s f^{-1}=u$, so that $s$ and $u$ are isotopic in $M$. This is however
absurd since $u$ and $s$ are distinct isometries of $M$. This final
contradiction shows that $K_u$ must be a $2$-twin of $K$.


\begin{remark}\label{r:smalldistance}
We remark that the hypothesis in Theorem~\ref{t:hempel} is sufficient but not
necessary. Indeed, let $K$ be a knot as in Proposition~\ref{p:hempel2} (or as 
in Remark~\ref{r:hempel3}), which is not determined by its double branched 
cover. Let $W_1'\cup W_2$ be the genus-$1$ Heegaard splitting of $\S^3$ that 
gives the (1,1)-splitting of $K$ with Hempel distance at most $2$ as in 
Remark~\ref{r:hempel3}, and let $D_1$, $D_2$ and $D_0$ be the discs in 
$W_1'-K$, $W_2-K$ and $W_1-L_2$, respectively, as in the remark. Denote by 
$\widetilde{W_1'}$ and $\widetilde{W_2}$ the pre-images of $W_1'$ and $W_2$, 
respectively, in the double branched cover $M$ of $K$. 
Then $\widetilde{W_1'}\cup \widetilde{W_2}$ is a genus-$2$ Heegaard splitting 
of $M$. Note that $D_1$ and $D_2$ lift to essential discs $\widetilde{D_1}$ and 
$\widetilde{D_2}$ in $\widetilde{W_1'}$ and $\widetilde{W_2}$, respectively, 
and $\partial D_0$ lifts to an essential loop on the Heegaard surface which is 
disjoint from $\widetilde{D_1}\cup\widetilde{D_2}$. Hence, the Hempel distance 
of $\widetilde{W_1'}\cup \widetilde{W_2}$ is at most $2$.
\end{remark}

\paragraph{Acknowledgements}
This research was carried out during a visit of L. Paoluzzi to Nara Women's
University in 2017 funded by JSPS through FY2017 Invitational
Fellowship for Research in Japan (short term) number S17112.
L. Paoluzzi is also thankful to Nara Women's University Math Department for
hospitality during her stay.

\footnotesize

\textsc{Department of Mathematics, Nara Women's University, Kitauoya 
Nishimachi, Nara 630-8506, Japan}

\texttt{yeonheejang@cc.nara-wu.ac.jp}

\textsc{Aix-Marseille Univ, CNRS, Centrale Marseille, I2M, UMR 7373,
13453 Marseille, France}

\texttt{luisa.paoluzzi@univ-amu.fr}

\end{document}